\def\obs#1{{\bf (*** #1 ***)} }
\def\obs#1{}     
\renewcommand\theenumi{\@roman\c@enumi}\makeatother
\theoremstyle{thm}
\newtheorem{thm}{Theorem}[section]
\newtheorem{cor}[thm]{Corollary}
\theoremstyle{lem}
\newtheorem{lem}[thm]{Lemma}
\newtheorem{prop}[thm]{Proposition}
\newtheorem{defn}[thm]{Definition}
\theoremstyle{rem}
\newtheorem{rem}[thm]{Remark}
\newtheorem{exe}[thm]{Example}
\newcommand{\Hh}{\mathcal{H}}
\newcommand{\I}{\mathcal{I}}
\newcommand{\af}{\alpha}
\newcommand{\bt}{\beta}
\newcommand{\cl}{\rm cl}
\newcommand{\om}{\omega}
\newcommand{\ot}{\otimes}
\newcommand{\m}{^{-1}}
\newcommand{\id}{\operatorname{id}}
\begin{document}
	
\thispagestyle{empty}

\title[On partial Galois abelian extensions]{On partial Galois abelian extensions}

\author[A. Ca\~nas]{Andr\'es Ca\~nas}
\address{Escuela de Matematicas, Universidad Industrial de Santander, Cra. 27 Calle 9  UIS
	Edificio 45, Bucaramanga, Colombia}\email{ @uis.edu.co }

\author[V. Mar\'{\i}n]{V\'{\i}ctor Mar\'{\i}n}
\address{Departamento de Matem\'{a}ticas y Estad\'{i}stica, Universidad del Tolima, Santa Helena\\
	Ibagu\'{e}, Colombia} \email{vemarinc@ut.edu.co}

%

\author[H. Pinedo]{H\'{e}ctor Pinedo}
\address{Escuela de Matematicas, Universidad Industrial de Santander, Cra. 27 Calle 9  UIS
	Edificio 45, Bucaramanga, Colombia}\email{ hpinedot@uis.edu.co}

\thanks{{\bf  Mathematics Subject Classification}: Primary . Secondary .}
\thanks{{\bf Key words and phrases:} Partial action, Galois theory, abelian extension, Harrison group}
\date{\today}

\begin{abstract}
	In this article we 
construct the inverse semigroup of equivalence classes of partial Galois abelian extensions of a commutative ring  $R$ with same group $G$, called the Harrison partial inverse semigroup.
\end{abstract}
\maketitle
\setcounter{tocdepth}{1}

\section{Introduction}

In the 1960's, M. Auslander and O. Goldman introduced in \cite{AG} the notion of Galois extension for commutative rings. After that, S. U. Chase, D. K. Harrison and A. Rosenberg developed in \cite{CHR} the Galois theory for commutative rings extending the classical theory over fields. One of the main results of \cite{CHR} is Theorem 2.3 which has two parts. Let $R\subset S$ be a Galois extension of commutative rings with Galois group $G$. 
The first part established a bijective correspondence between subgroups of $G$ and $R$-subrings of $S$ which are $G$-strong. In the second part, it was shown that if $H$ is a normal subgroup of $G$ then the subring $S^H$ of $S$ of invariantes by the action of $H$ is a Galois extension of $R$ with Galois group $G/H$. \smallbreak

A Galois extension of commutative rings $R\subset S$ with Galois group $G$ is called {\it abelian} when $G$ is an abelian group. For a fixed abelian group $G$ and a fixed commutative ring $R$,  D. K. Harrison constructed in \cite{H} a group $\Hh(G,R)$, which is called the {\it Harrison group}. The elements of $\Hh(G,R)$  are the classes of $G$-isomorphism of abelian extensions of $R$ with group $G$. To define a binary operation in $\Hh(G,R)$, D. K. Harrison used the second part of Theorem 2.3 of \cite{CHR}; see details in \cite{H}. \smallbreak

M. Dokuchaev, M. Ferrero and A. Paques developed in \cite{DFP} the Galois theory for commutative rings when the group acts partially. The results proved in \cite{DFP} generalizes many results of \cite{CHR}. For instance, Theorem 5.1 of \cite{DFP} generalizes the first part of Theorem 2.3 of \cite{CHR}. However, there is no generalization of the second part of Theorem 2.3 of \cite{CHR} to the context of partial actions. \smallbreak

Given a finite abelian group $G$ and a commutative ring $R$, we consider the set  $\Hh_{par}(G,R)$ of equivalence classes of partial Galois extensions of $R$ with group $G$. In particular, we have $\Hh(G,R)\subset \Hh_{par}(G,R)$. The main purpose of this paper is to provide a structure of inverse semigroup for $\Hh_{par}(G,R)$. For such, we will generalize the second part of Theorem 2.3 of \cite{CHR} to the context of partial actions and we will follow similar ideas to those used in \cite{H}.
\smallbreak

The paper is organized as follows. The basic notions and results that we used throughout the paper are presented in Section \ref{notions}. In Section \ref{quo}, we prove the second part of Theorem 2.3 of \cite{CHR} for partial actions. Following ideas from \cite{DFP}, we study in Section \ref{enve}  (partial) Galois extensions $G$-isomorphic. 
Finally, in Section 5,  we prove that $\Hh_{par}(G,R)$ is an inverse semigroup and we illustrate the binary operation  of $\Hh_{par}(G,R)$ with a concrete example.

\subsection*{Conventions}\label{subsec:conv}
Throughout the paper, rings are always considered commutative with identity element. Each ring homomorphism is unitary, that is, it sends identity element in identity element. The extensions of rings have same identity element. If $S$ and $T$ are extensions of a same ring $R$ then $S\otimes T$ means $S\otimes_R T$.
Moreover, $k$ will denote an associative and commutative ring with unity and $G$ a group. The identity element of $G$ will be denoted by $1$.

\section{Preliminaries}\label{notions}

In this section we present the background about partial actions and globalization that will be used in the paper.

\subsection{Partial action of groups} A \textit{partial action} of a group $G$ on a $k$-algebra $S$ is a pair $\alpha=(S_g,\alpha_g)_{g\in G}$
that satisfies:  
\begin{enumerate}[\hspace{.35cm}(P1)]
	    \item for each $g\in G$, $S_g$ is an ideal of $S$ and $\af_{g}:S_{g\m}\to S_g$ is a $k$-algebra isomorphism,\vspace{.08cm}
		\item $S_1=S$ and $\alpha_1=\id_S$, \vspace{.08cm}
		\item $\alpha_g(S_{g^{-1}}\cap S_h)=S_g\cap S_{gh}$, for all $g,h \in G$,\vspace{.08cm}
		\item $\alpha_g \circ \alpha_h(x)=\alpha_{gh}(x)$, for all  $ x\in S_{h^{-1}}\cap S_{(gh)\m}$ and $g,h \in G$.
\end{enumerate}
The partial action $\af$ is called {\it unital} if every ideal $S_g$ is unital, that is, there exists $1_g\in S$ such that $S_g=S1_g$.
Notice that the conditions (P3) and (P4) imply that $\alpha_{gh}$  is an extension of $\alpha_g \circ \alpha_h$, for every $g,h\in G$.\smallbreak

A partial action $\af$ is said {\it global} if $S_g=S$, for all $g\in G$. Global actions of a group $G$ on a $k$-algebra $T$ induce, by restriction, partial actions on any ideal $S$ of $T$. Indeed, given a global action $\bt=(T_g,\beta_g)_{g\in G}$  of $G$ on $T$ we consider the ideals $S_g=S\cap \beta_g(S)$ of $S$ and the $k$-algebra isomorphisms $\af_g=\beta_g|_{S_{g\m}}$. Then $\af=(S_g,\af_g)$ is a partial action of $G$ on $S$. Partial actions obtained in this way are called {\it globalizable}. The precise definition is given below. \smallbreak   

Let $\alpha=(S_g,\alpha_g)_{g\in G}$ be a partial action of a group $G$ on a $k$-algebra $S$. A {\it globalization of $\af$}  is a global action $\bt=(T_g,\beta_g)_{g\in G}$  of $G$ on a $k$-algebra $T$ that satisfies:
\begin{enumerate}[\hspace{.35cm}(G1)]
	\item $S$ is an ideal of $T$, \vspace{.08cm}
	\item $S_g=S \cap \beta_g(S)$, for all $g\in G$,\vspace{.08cm}
	\item $\beta_g (x)=\alpha_g(x)$, for all $x \in S_{g^{-1}}$,\vspace{.08cm}
	\item $T=\sum_{g \in G}\beta_g(S)$.
\end{enumerate}
If $\af$ admits a globalization we say that $\af$ is {\it globalizable}. The globalization of $\af$ is
unique, up to isomorphism, and will be denote by $(T,\beta)$. Also, Theorem 4.5 of \cite{DE} implies that  unital partial actions admit globalizations. More details related to globalization can be seen in \cite{DE}.

From now on we assume that  $G$ is a finite group, $\af=(S_g,\alpha_g)_{g\in G}$ is a unital partial action of $G$ on a $k$-algebra $S$ such that $S_g=S1_g$, for all $g\in G$, and $\beta=(T_g,\beta_g)_{g\in G}$ is a global action of $G$ on a $k$-algebra $T$ which is the globalization of $\af$. Notice that $1_S$ is a central idempotent element of $T$ and $S=T1_S$. Moreover, it was proved in \cite[p.79]{DFP} that 
\begin{align}\label{form1:partial-action}
&1_g=\bt_g(1_S)1_S,\quad \alpha_g(s1_{g\m})=\beta_g(s)1_S,\quad\af_g(\af_h (s1_{h\m})1_{g\m})=\af_{gh}(s1_{(gh)\m})1_g,&  
\end{align} 
for all $g,h\in G$ and $s\in S$. Let $H=\{h_1=1, h_2, \dots, h_m\}$ be a subgroup  of $G$. It was defined in \cite[p.79]{DFP} the map $\psi_H:T\to T$  by 
\begin{equation} \label{fih}
\psi_H(t)=\sum_{1\leq l \leq m}\sum_{i_1< \cdots < i_l}(-1)^{l+1}\beta_{h_{i_1}}(1_S)\beta_{h_{i_2}}(1_S)\cdots \beta_{h_{i_l}}(1_S)\beta_{h_{i_l}}(t),\quad t\in T.
\end{equation}
This map can be rewritten as
\begin{equation*}
\psi_H(t)=\sum\limits_{i=1}^m\beta_{h_i}(t)e_i,\quad\text{for all}\,\, t\in T,
\end{equation*}
where $e_i$'s are the following idempotents of $T$: 
\begin{equation}\label{losei}
e_1=1_S, \quad e_i=(1_T-1_S)(1_T-\beta_{h_2}(1_S))\dots (1_T-\beta_{h_{i-1}}(1_S))\beta_{h_i}(1_S),\quad 2\leq i\leq m.
\end{equation}

Since $1_S$ is a central idempotent of $T$, given $2\leq i\leq m$ we have
\begin{align*}\bt_g(e_i)1_S&=(1_T-\bt_g(1_S))(1_T-\beta_{gh_2}(1_S))\dots (1_T-\beta_{gh_{i-1}}(1_S))\beta_{gh_i}(1_S)1_S\\
&=(1_S-\bt_g(1_S)1_S)(1_S-\beta_{gh_2}(1_S)1_S)\dots (1_S-\beta_{gh_{i-1}}(1_S)1_S)\beta_{gh_i}(1_S)1_S\\
&\stackrel{\eqref{form1:partial-action}}=(1_S-1_g)(1_S-1_{gh_2})\dots (1_S-1_{gh_{i-1}})1_{gh_i}.
\end{align*}
Thus 
\begin{equation}\label{btgei}
\bt_g(e_1)1_S=1_g\,\,\, \,\text{and}\,\,\,\, \bt_g(e_i)1_S=\prod\limits_{j=2}^i(1_S-1_{gh_{i-1}})1_{gh_i}, \quad 2\leq i\leq m.
\end{equation}

We recall from \cite{DFP} that $S^\alpha:=\{s\in S :\alpha_g(s1_{g^{-1}})=s1_g,\,\text{for all}\,\, g\in G\}$ is called the {\it subalgebra of invariants} of $S$ under $\af$. 
If $\alpha$ is global then $S^\alpha$ is the classical subalgebra of invariants, i~e. $S^\alpha=S^{G}=\{x\in S: \af_g(s)=s,\,\text{for all}\,\, g\in G\}$. \smallbreak


For a subgrupo $H$ of $G$, we shall denote by $\af_H$ the partial action of $H$ on $S$ obtained by restriction of $\af$, i.~e. $\af_H=(S_h,\af_h)_{h\in H}$. Some properties of the map $\psi_H$ are given in the next.

\begin{prop}\label{pro3.1}
Let $H=\{h_1=1, h_2, \dots, h_m\}$ be a subgroup  of $G$ and  $\psi_H:T\to T$ the map defined in \eqref{fih}. Then:
	\begin{enumerate}[\rm (i)]
		\item $\psi_H$ is a $k$-algebra homomorphism,\vspace{0.1cm}
		\item $\psi_H(1_S)=1_T$ if and only if $H=G$, \vspace{0.1cm}
		\item $\psi_H$ is left and right $T^H$-linear map,\vspace{0.1cm}
		\item the restriction $\psi_H|_S$ to $S$ is injective,\vspace{0.1cm}
		\item $e_H:=\psi_H(1_S)$ is a central idempotent of $T$,\vspace{0.1cm}
		\item $\psi_H(S^{\alpha_H})\subset T^H$,\vspace{0.1cm}
		\item the restriction of $\psi_H$ to $S^{\alpha_H}$ is a $k$-algebra isomorphism from $S^{\alpha_H}$ onto $T^He_H$ whose inverse is the multiplication by $1_S$. In particular $T^H1_S=S^{\alpha_H}$.
	\end{enumerate}
\end{prop}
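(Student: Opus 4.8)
The plan is to extract, once and for all, the elementary facts about the idempotents $e_i$ of \eqref{losei} and then feed them into each item. Since $1_S$ is central in $T$, every $\bt_g(1_S)$ is a central idempotent, so each $e_i$ is a product of commuting central idempotents and hence is itself a central idempotent; moreover for $i<j$ the factor $\bt_{h_i}(1_S)$ of $e_i$ and the factor $(1_T-\bt_{h_i}(1_S))$ of $e_j$ force $e_ie_j=0$, so the $e_i$ are pairwise orthogonal. Two one-line computations I will use repeatedly are $\bt_{h_i}(1_S)e_i=e_i$ and $e_i1_S=\de_{i,1}1_S$ (only the $i=1$ term survives, since $e_i$ carries the factor $1_T-1_S$ for $i\ge 2$). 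From orthogonality, $\psi_H(s)\psi_H(t)=\sum_{i,j}\bt_{h_i}(s)\bt_{h_j}(t)e_ie_j=\sum_i\bt_{h_i}(st)e_i=\psi_H(st)$, which gives (i); additivity and $k$-linearity are clear. For (v), $e_H=\psi_H(1_S)=\sum_i\bt_{h_i}(1_S)e_i=\sum_ie_i$ is a sum of orthogonal central idempotents, hence central idempotent. For (iii), if $c\in T^H$ then $\bt_{h_i}(c)=c$ for every $i$, so $\psi_H(ct)=\sum_i\bt_{h_i}(c)\bt_{h_i}(t)e_i=c\,\psi_H(t)$, and the same on the right by commutativity. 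For (iv), using $e_i1_S=\de_{i,1}1_S$ and $\bt_1=\id$ one gets $\psi_H(s)1_S=\bt_{h_1}(s)e_11_S=s$ for every $s\in S$; thus $\psi_H|_S$ is injective with left inverse ``multiplication by $1_S$'', and this identity is the seed of (vii).

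For (ii) I observe that $e_H=\sum_ie_i$ is the join of the commuting idempotents $\bt_{h_i}(1_S)$, so the standard disjointification identity gives $1_T-e_H=\prod_{i=1}^m(1_T-\bt_{h_i}(1_S))=\prod_{h\in H}(1_T-\bt_h(1_S))$. Hence $\psi_H(1_S)=e_H=1_T$ is equivalent to the vanishing of this product, i.e. to the idempotents $\{\bt_h(1_S)\}_{h\in H}$ generating $T$ as an ideal. When $H=G$ this is exactly axiom (G4), since $T=\sum_{g\in G}\bt_g(S)=\sum_{g\in G}T\bt_g(1_S)$, so $e_G=1_T$; the converse direction rests on the fact that for a proper $H$ the idempotents indexed by $H$ do not yet generate the unit ideal, which is where a nontrivial globalization is used and which I would settle by producing a nonzero element annihilated by $\bt_h(1_S)-1_T$ for every $h\in H$.

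Items (vi) and (vii) are the heart, and I expect (vi) to be the real obstacle. The tool is the translation, via \eqref{form1:partial-action}, of the invariance condition $\af_h(s1_{h\m})=s1_h$ defining $s\in S^{\af_H}$ into the purely global identity $\bt_h(s)1_S=s\,\bt_h(1_S)1_S$ for $h\in H$; applying $\bt_g$ yields, for all $g\in G$ and $h\in H$,
\begin{equation*}
\bt_{gh}(s)\bt_g(1_S)=\bt_g(s)\bt_{gh}(1_S)\bt_g(1_S),
\end{equation*}
which says that within a fixed left coset $gH$ the ``active'' translate of $s$ may be moved from one representative to another at the cost of inserting the corresponding idempotents. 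Reading $\psi_H$ in its inclusion–exclusion form \eqref{fih}, each summand is $\pm\prod_{b\in B}\bt_b(1_S)\,\bt_{\max B}(s)$ indexed by a nonempty subset $B\subseteq H$; the coset relation (all indices now lying in $H$) shows that for $s\in S^{\af_H}$ the factor $\bt_{\max B}(s)$ may be replaced by $\bt_b(s)$ for any $b\in B$, so each summand is symmetric in which element of $B$ carries $s$. Since $\bt_{h'}$ acts on these summands through the bijection $B\mapsto h'B$ of nonempty subsets of $H$ and preserves the symmetric form, one obtains $\bt_{h'}(\psi_H(s))=\psi_H(s)$ for all $h'\in H$, which is precisely $\psi_H(S^{\af_H})\subseteq T^H$.

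Finally, for (vii) I assemble the pieces. The image of $\psi_H$ lies in $Te_H$ because $e_ie_H=e_i$, so with (vi) we get $\psi_H(S^{\af_H})\subseteq T^He_H$, and injectivity is (iv). For surjectivity and the inverse I use two computations: $\psi_H(s)1_S=s$ for $s\in S$ (from (iv)), and, for $x\in T^He_H$, $\psi_H(x1_S)=\sum_i\bt_{h_i}(x)\bt_{h_i}(1_S)e_i=x\sum_i\bt_{h_i}(1_S)e_i=xe_H=x$, using $x\in T^H$ and $\bt_{h_i}(1_S)e_i=e_i$. It remains to check that multiplication by $1_S$ sends $T^He_H$ into $S^{\af_H}$: for $x\in T^H$ and $h\in H$, \eqref{form1:partial-action} gives $\af_h((x1_S)1_{h\m})=\bt_h(x1_S)1_S=x\bt_h(1_S)1_S=(x1_S)1_h$, so $x1_S\in S^{\af_H}$. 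Thus $x\mapsto x1_S$ and $\psi_H$ are mutually inverse bijections between $T^He_H$ and $S^{\af_H}$, proving (vii); and since $e_H1_S=1_S$ gives $T^H1_S=T^He_H1_S$, this bijection yields $T^H1_S=S^{\af_H}$. The only genuinely delicate point is the coset-symmetry argument underlying (vi); everything else is bookkeeping with the orthogonal idempotents $e_i$.
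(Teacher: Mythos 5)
Your treatment of items (i) and (iii)--(vii) is correct and is essentially the paper's own proof: the same bookkeeping with the orthogonal central idempotents $e_i$ of \eqref{losei}, the same identities $\beta_{h_i}(1_S)e_i=e_i$ and $e_i1_S=\delta_{i,1}1_S$, and for (vi) the same key step --- translating $s\in S^{\alpha_H}$ via \eqref{form1:partial-action} into $\beta_h(s)1_S=s\beta_h(1_S)1_S$ and showing $\beta_{h'}$ permutes the summands of \eqref{fih}. Your subset-indexed ``symmetric form'' (each summand $\pm\prod_{b\in B}\beta_b(1_S)\beta_{\max B}(s)$ is independent of which element of $B$ carries $s$, and $B\mapsto h'B$ is a sign-preserving bijection) is in fact a cleaner write-up of the paper's slightly garbled rearrangement argument, and your (vii) agrees with the paper's verification that $x\mapsto x1_S$ and $\psi_H$ are mutually inverse.

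The genuine gap is the ``only if'' half of (ii). You do not prove it; you announce a plan, and the plan as stated is miscalibrated. A nonzero $t$ annihilated by $\beta_h(1_S)-1_T$ for every $h\in H$ satisfies $\beta_h(1_S)t=t$ for all $h$, hence, by your own disjointification identity $1_T-e_H=\prod_{h\in H}(1_T-\beta_h(1_S))$, it satisfies $e_Ht=t$ --- which is the opposite of a witness that $e_H\neq 1_T$; what you would need is a nonzero $t$ with $t\,\beta_h(1_S)=0$ for all $h\in H$, since then $te_H=0\neq t$. More seriously, no such witness can be produced from the standing hypotheses alone: if $\alpha$ is itself global, its globalization is $T=S$, so $\beta_h(1_S)=1_T$ for every $h$ and $\psi_H(1_S)=1_T$ for \emph{every} subgroup $H$, proper or not. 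So the converse direction needs more than ``a nontrivial globalization is used'' --- it needs an actual hypothesis ruling this degeneracy out. For what it is worth, the paper's own proof of this direction is equally thin (it deduces $H=G$ from $T=\sum_{h\in H}\beta_h(S)$, a non sequitur that fails in the same global example), so your instinct that something nontrivial was being elided is sound; but in your write-up the converse of (ii) remains an acknowledged hole rather than a proof, and as phrased the proposed repair would not close it.
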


\begin{proof}
Notice that (i) is immediate because $\beta_g$, $g\in G$, is a $k$-algebra homomorphism and $\{e_i\,:\,1\leq i\leq m\}$ is a set of orthogonal idempotents of $T$. For (ii), observe that $\beta_{h_i}(1_S)e_i=e_i$, for all $1\leq i\leq m$. Hence, $\psi_H(1_S)=e_1+\ldots +e_m$. Since $T=\sum_{g\in G}\beta_g(S)$ by (G4), it follows that $\psi_G(1_S)=1_T$. Conversely, if $\psi_H(1_S)=1_T$ then $1_T$ belongs to the ideal  $\sum\limits_{ i=1} ^m\beta_{h_i}(S)e_i$ of $T$. Hence $\sum\limits_{ i=1} ^m\beta_{h_i}(S)e_i=T=\sum_{g\in G}\beta_g(S)$,  which implies $H=G$.

The item (iii) is clear. For (iv), take $s\in S$ and notice that 
\[\psi_H(s)1_S=\sum_{i=1}^m\beta_{h_i}(s)e_i1_S=\beta_1(s)1_S=s\] 
because $e_i1_S=0$, for all $2\leq i\leq m$. Thus, (iv) follows.  Since $\{e_i\,:\,1\leq i\leq m\}$ is a set of central orthogonal idempotents of $T$ and $\psi_H(1_S)=e_1+\ldots +e_m$, the item (v) follows. 
	
For (vi), we need to show that $\beta_h(\psi_H(s))=\psi_H(s)$, for all $s\in S^{\alpha_H}$ and $h\in H$. Thus, it is enough to check that $\beta_h(\psi_H(s))$ gives us a permutation in the elements that appear in the sum of  $\psi_H(s)$ given in \eqref{fih}. First, observe that
\begin{align*}
\beta_{h_i}(1_S)\beta_{h_j}(s)&=\beta_{h_j}\left(\beta_{h_j^{-1}h_i}(1_S)s\right)=\beta_{h_j}\left(\beta_{h_j^{-1}h_i}(1_S)s1_S\right)\\
                              &\stackrel{\eqref{form1:partial-action}}{=}\beta_{h_j}(1_{h_j^{-1}h_i}s)\stackrel{(\star)}{=}\beta_{h_j}(\beta_{h_j^{-1}h_i}(s)1_S)\\
                              &=\beta_{h_i}(s)\beta_{h_j}(s)\stackrel{(\star\star)}{=}\beta_{h_j}(1_S)\beta_{h_i}(s),
\end{align*}	
where $(\star)$ follows because $\beta_h(s)1_S\stackrel{\eqref{form1:partial-action}}{=}\af_h(s1_{h^{-1}})=s1_h$ and $(\ast\ast)$ follows using that $\beta_h(1_S)$ is a central element of $T$, for all $h\in H$. Now, consider  $1\leq l\leq n$ and $i_1<\dots<i_l$ and note that
\begin{align*}
\beta_h(\beta_{h_{i_1}}(1_S)\cdots \beta_{h_{i_{l-1}}}(1_S)\beta_{h_{i_l}}(s))&=\beta_{h{h_{i_1}}}(1_S)\cdots \beta_{h{h_{i_{l-1}}}}(1_S)\beta_{h{h_{i_l}}}(s)\\
                                                                              &=\beta_{{h_{j_1}}}(1_S)\cdots \beta_{{h_{j_{l-1}}}}(1_S)\beta_{{h_{j_l}}}(s),
\end{align*}
where $h_{j_k}=hh_{i_k}$ for all $1\leq k\leq l$. Since $\beta_{h}(1_S)$, $h\in H$, are central elements of $T$, we can rearrange $\beta_{h_{j_1}}(1_S),\ldots,\beta_{h_{j_{l-1}}}(1_S)$ such that  $j_1<\ldots < j_{l-1}$. If $j_{l-1}<j_l$, there is nothing to do. Otherwise, as we showed above, $\beta_{h_{l-1}}(1_S)\beta_{h_l}(s)=\beta_{h_{l}}(1_S)\beta_{h_{l-1}}(s)$. Thus, $\beta_h(\beta_{h_{i_1}}(1_S)\cdots \beta_{h_{i_{l-1}}}(1_S)\beta_{h_{i_l}}(s))$ appears in the sum of $\psi_H(s)$ and the result follows.

Finally to (vii), notice that by (v) and (vi) we have $\psi_H(S^{\af_H})=\psi_H(S^{\af_H}1_S)\subset T^He_H$. Hence, by (1), we conclude that  $\psi_H: S^{\alpha_H} \rightarrow T^He_H$ is a $k$-algebra  homomorphism. Also, if $x\in T^H$ then
\begin{align*}
	\alpha_h((x1_S)1_{h^{-1}})&=\beta_h((x1_S)1_{h^{-1}})\stackrel{\eqref{form1:partial-action}}{=}\beta_h(x1_S\beta_{h^{-1}}(1_S))\\
	                          &=\beta_h(x1_S)1_S=\beta_h(x)\beta_h(1_S)1_S\stackrel{\eqref{form1:partial-action}}{=}x1_h\\[0.3em]
	                          &=(x1_S)1_h,
\end{align*}
for all $h\in H$. Hence  $x1_S\in S^{\alpha_H}$. Thus  $m:T^He_H\to S^{\alpha_H}$ is a $k$-algebra homomorphism well-defined, as $e_H1_S=1_S$. It is clear that $m(\psi_H(s))=s$
and $\psi_H(m(xe_H))=xe_H$, for all $s\in S^{\alpha_H}$ and $x\in T^H$.
\end{proof}

\section{Partial galois actions of quotient groups}\label{quo}
From now on in this work we will work with commutative rings with unit. Following 
\cite{DFP}, we say  that $S\supseteq S^\alpha$ is a \textit{partial Galois extension},  if 
for some $m\in \mathbb{N}$ there exist elements $x_i,y_i\in S, 1\leq i\leq m$, such that 
\begin{equation*}\label{G2}
\sum_{i=1}^mx_i\alpha_{g}(y_i1_{g^{-1}})=\delta_{1, g},\, \text{for each}\, g \in G.
\end{equation*}
The elements $x_i,y_i$  are called \textit{partial Galois coordinates} of $S$ over $S^\alpha$.


If $S$ is a $\alpha$-partial Galois extension of $S^\alpha$, then  
\cite[Theorem 5.2]{DFP} states that for  every subgroup $H$ of $G$  one has that  $\alpha_H$-partial Galois extension of $S^{\alpha_H}$. Now we give an addendum of this result.

\begin{thm}\label{teofundpar2}
	Let $S$ be a ring, $G$ a finite group and $\alpha$ a partial action of   $G$ on $S$ such that $S$ is $\alpha$-partial Galois extension of $S^\alpha$. Then, every normal subgroup $H$ of $G$ induces a partial action $\alpha_{G/H}$ on $S^{\alpha_H}$ and $S^{\alpha_H}$ is $\alpha_{G/H}$-partial Galois extension of $S^\alpha$.
\end{thm}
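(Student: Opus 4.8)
The plan is to lift everything to the globalization $(T,\beta)$ and exploit the isomorphism $\psi_H\colon S^{\alpha_H}\xrightarrow{\sim} T^He_H$ of Proposition \ref{pro3.1}(vii). First I would record two structural facts. Since $H$ is normal, for $t\in T^H$ and $h\in H$ one has $\beta_h(\beta_g(t))=\beta_{hg}(t)=\beta_g(\beta_{g^{-1}hg}(t))=\beta_g(t)$, because $g^{-1}hg\in H$; hence each $\beta_g$ restricts to an automorphism $\bar\beta_{\bar g}$ of $T^H$ depending only on the coset $\bar g=gH$, and this yields a global action $\bar\beta$ of $G/H$ on $T^H$ with $(T^H)^{G/H}=T^G$. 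Also, since $\psi_H$ is $T^H$-linear and $\psi_H(1_S)=e_H$, for $x\in T^G$ we get $\psi_H(x1_S)=xe_H$, whence $\psi_H(S^\alpha)=T^Ge_H$ and, taking $H=G$, $S^\alpha\cong T^G$. I then define $\alpha_{G/H}$ on $S^{\alpha_H}$ by transporting, through $\psi_H$, the partial action obtained by restricting the global action $\bar\beta$ to the ideal $T^He_H$ of $T^H$ (here $e_H$ is a central idempotent of $T$ lying in $T^H$). Being the restriction of a global action to an ideal, $\alpha_{G/H}$ is automatically a partial action; the only points to check are that it is independent of the chosen coset representatives---this is exactly where normality of $H$ enters---and that its subalgebra of invariants is $S^\alpha$, i.e. that the invariants of the restricted action on $T^He_H$ are $T^Ge_H$.

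Next I would show that the globalization is itself globally Galois. From the partial Galois coordinates $x_i,y_i\in S$ and the identity $\alpha_g(y_i1_{g^{-1}})=\beta_g(y_i)1_S$ of \eqref{form1:partial-action}, one gets $\sum_i x_i\beta_g(y_i)=\delta_{1,g}1_S$ in $T$ for every $g\in G$. Using the orthogonal idempotents $e_r$ of \eqref{losei} for $H=G$, which satisfy $e_r\beta_r(1_S)=e_r$ and $\sum_{r\in G}e_r=\psi_G(1_S)=1_T$, I claim the elements $a_{i,r}=e_r\beta_r(x_i)$ and $b_{i,r}=\beta_r(y_i)$, indexed by $i$ and $r\in G$, are global Galois coordinates for $T$ over $T^G$. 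Indeed,
\begin{align*}
\sum_{i,r}a_{i,r}\beta_g(b_{i,r})=\sum_{r\in G}e_r\,\beta_r\Big(\sum_i x_i\beta_{r^{-1}gr}(y_i)\Big)=\sum_{r\in G}e_r\,\beta_r(\delta_{1,r^{-1}gr}1_S)=\delta_{1,g}\sum_{r\in G}e_r=\delta_{1,g}1_T,
\end{align*}
since $r^{-1}gr=1$ iff $g=1$ and $e_r\beta_r(1_S)=e_r$. Thus $T$ is a $\beta$-Galois extension of $T^G$.

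With $T$ globally $G$-Galois over $T^G$ and $H$ normal, I would then invoke the classical second part of Theorem 2.3 of \cite{CHR} to conclude that $T^H$ is a (global) $G/H$-Galois extension of $T^G$; let $p_k,q_k\in T^H$ be classical Galois coordinates, so $\sum_k p_k\bar\beta_{\bar g}(q_k)=\delta_{1,\bar g}1_T$ for all $\bar g\in G/H$. The final step is to restrict to the idempotent $e_H$ and transport through $\psi_H$. Writing $1_{\bar g^{-1}}=e_H\beta_{g^{-1}}(e_H)$ for the generating idempotent of $(T^He_H)_{\bar g^{-1}}$, a direct computation (using $\beta_g(e_H)=e_H$ when $g\in H$) gives $\sum_k (p_ke_H)\,\bar\beta_{\bar g}\big((q_ke_H)1_{\bar g^{-1}}\big)=\delta_{1,\bar g}\,e_H$, the identity of $T^He_H$; hence $p_ke_H,\,q_ke_H$ are partial Galois coordinates for $T^He_H$ over $T^Ge_H$. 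Pulling back through $\psi_H$, whose inverse is multiplication by $1_S$, the elements $u_k=p_k1_S$ and $v_k=q_k1_S$ of $S^{\alpha_H}$ serve as partial Galois coordinates of $S^{\alpha_H}$ over $S^\alpha$ for $\alpha_{G/H}$, which finishes the proof.

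The main obstacle I anticipate is not the coordinate computations---these fall out cleanly once $T$ is known to be globally Galois---but the bookkeeping around the idempotent $e_H$: namely verifying that the subalgebra of invariants of the restricted action on $T^He_H$ is exactly $T^Ge_H=\psi_H(S^\alpha)$, so that the transported extension is genuinely over $S^\alpha$ and not a larger ring, together with the well-definedness of $\alpha_{G/H}$ on cosets. The inclusion $T^Ge_H\subseteq (T^He_H)^{\bar\beta}$ is a direct calculation, while the reverse inclusion should be extracted from the global identity $(T^H)^{G/H}=T^G$ together with Galois descent along the coordinates $p_k,q_k$; this is the step I would treat most carefully.
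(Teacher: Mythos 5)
Your overall architecture is the same as the paper's: pass to the globalization $(T,\beta)$, use the classical part (ii) of \cite[Theorem 2.3]{CHR} to make $T^H$ a global $G/H$-extension of $T^G=(T^H)^{G/H}$, restrict to the ideal $T^He_H$, and pull back through the isomorphism $\psi_H\colon S^{\alpha_H}\to T^He_H$ of Proposition \ref{pro3.1}(vii) --- this is exactly how the paper defines $\alpha_{G/H}$ in \eqref{idempgi}--\eqref{alfaprima}. Where you genuinely diverge, to your credit, is in working with explicit Galois coordinates: your computation $\sum_{i,r}e_r\beta_r(x_i)\,\beta_g(\beta_r(y_i))=\delta_{1,g}1_T$ reproves (one direction of) \cite[Theorem 3.3]{DFP}, which the paper only cites, and your coordinates $p_ke_H,\ q_ke_H$ with $\sum_k(p_ke_H)\gamma_{\bar g}\bigl((q_ke_H)e_{\bar g^{-1}}\bigr)=\delta_{1,\bar g}e_H$ replace the paper's two further appeals to \cite[Theorem 3.3]{DFP} in Corollary \ref{cor-action-quotient}(iv) and Proposition \ref{pro1.2.6.}(iv). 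All three of these computations are correct, and transporting via $\psi_H$ (whose inverse is $m_{1_S}$ and which intertwines $\alpha_{gH}$ with $\gamma_{gH}$, cf.\ \eqref{otra}) does yield partial Galois coordinates $p_k1_S,\,q_k1_S\in S^{\alpha_H}$.

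The genuine gap is the step you defer to the end: the identification $(T^He_H)^{\gamma_{G/H}}=T^Ge_H$, equivalently $(S^{\alpha_H})^{\alpha_{G/H}}=S^\alpha$ (Proposition \ref{pro1.2.6.}(ii)), without which you have only shown that $S^{\alpha_H}$ is partial Galois over its invariant ring, not over $S^\alpha$. Your proposed tool, ``Galois descent along the coordinates $p_k,q_k$,'' does not work: writing $u_k=p_ke_H$, $v_k=q_ke_H$ and $e_{\bar g^{-1}}=e_H\beta_{g^{-1}}(e_H)$, for a $\gamma_{G/H}$-invariant $x$ one has
\begin{equation*}
tr_{\gamma}(v_kx)=\sum_{\bar g\in G/H}\gamma_{\bar g}(v_ke_{\bar g^{-1}})\,\gamma_{\bar g}(xe_{\bar g^{-1}})=x\,tr_{\gamma}(v_k),
\qquad
\sum_k u_k\,tr_{\gamma}(v_k)=e_H,
\end{equation*}
so the reproducing identity $x=\sum_ku_k\,tr_{\gamma}(v_kx)$ collapses to the tautology $x=xe_H$; moreover $z=\sum_ku_k\,tr_{\gamma}(v_kz)$ with $tr_{\gamma}(v_kz)\in T^Ge_H$ holds for \emph{every} $z\in T^He_H$, so the coordinates cannot separate the invariants from the whole ring. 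The paper closes this step with machinery you do not have: Proposition \ref{lem2}, the transversal computation with $f_H=\prod_{i=1}^m(1_T-\beta_{h_i}(1_S))$ showing $T^H=\sum_{g_i\in\mathcal T}\beta_{g_i}(T^He_H)$, i.e.\ that $(T^H,\beta_{G/H})$ is the \emph{enveloping action} of $(T^He_H,\gamma_{G/H})$; this licenses applying Proposition \ref{pro3.1}(vi)--(vii) with $(T^He_H,\gamma_{G/H},T^H)$ in place of $(S,\alpha,T)$, giving Corollary \ref{cor-action-quotient}(iii): $\psi_{G/H}(x)=\sum_i\beta_{g_i}(x)e_i'\in T^G$ for invariant $x$, where the $e_i'$ are the idempotents \eqref{losei} orthogonalized from the $\beta_{g_i}(e_H)$, and $\psi_{G/H}(x)e_H=x$ since $e_1'=e_H$ and $e_i'e_H=0$ for $i\geq 2$. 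Replacing your descent sketch by this orthogonal-idempotent and permutation argument (the analogue of the proof of Proposition \ref{pro3.1}(vi)) is what completes your proof; the rest of it stands.
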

\emph{From now on, we assume that 
 $H$ is a normal subgroup of $G.$}

The proof of Theorem \ref{teofundpar2} will be obtained as consequence of several results which we state and prove below. By  \cite[Theorem 2.3]{CHR}, the global action $\beta$ of $G$ on $T$ induces a global action $\beta_{G/H}$ of $G/H$ on $T^H$ in the following way: 
$$\beta_{G/H}:G/H\rightarrow \text{Aut}(T^H),\quad gH\mapsto \beta_g|_{T^H}.$$ Furthermore, if $T$ is a $G$-Galois extension of $T^G$, then $T^H$ is a $G/H$-Galois extension of $T^G=(T^H)^{G/H}$. \\
On the other hand by (v) of Proposition \ref{pro3.1} $T^He_H$ is an ideal of $T^H,$ then the action $\beta_{G/H}$ of $G/H$ on $T^H$ induces a partial action $\gamma_{G/H}$ of $G/H$ on $T^He_H$ in a canonical way, that is, $\lambda_{G/H}=(D_{gH}, \gamma_{gH})_{gH\in G/H}$ is given by
\begin{align}
\label{par1-action}&D_{gH}=(T^He_H)\cap \beta_g(T^He_H)=T^He_{gH},\,\,\,\text{where}\,\,\, e_{gH}=e_H\beta_g(e_H),&\\[.3em]
&\gamma_{gH}=\beta_g|_{D_{g^{-1}H}}:D_{g^{-1}H}\to D_{gH},\,\,\, \text{for each}\,\,\, gH\in G/H.&
\label{par2-action}\end{align}

\begin{prop} \label{lem2}With the notations above we have that $(T^H,\beta_{G/H})$ is an enveloping action of $(T^He_H,\alpha_{G/H})$.

\end{prop}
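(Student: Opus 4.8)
The plan is to verify directly the four defining axioms (G1)--(G4) of a globalization, taking $\beta_{G/H}=(T^H,\beta_{gH})_{gH\in G/H}$ as the candidate global action and $\gamma_{G/H}=(D_{gH},\gamma_{gH})_{gH\in G/H}$ (the partial action denoted $\alpha_{G/H}$ in the statement) as the partial action on the ideal $T^He_H$. Axioms (G2) and (G3) hold by the very construction \eqref{par1-action}--\eqref{par2-action}, since $D_{gH}=(T^He_H)\cap\beta_g(T^He_H)$ and $\gamma_{gH}=\beta_g|_{D_{g^{-1}H}}$, so that $\gamma_{G/H}$ is by definition the restriction of $\beta_{G/H}$ to $T^He_H$. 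For (G1) I would first note that $1_S\in S^{\alpha_H}$, because for $h\in H$ one has $\alpha_h(1_S1_{h^{-1}})=\alpha_h(1_{h^{-1}})=1_h=1_S1_h$; hence by Proposition \ref{pro3.1}(vi) the element $e_H=\psi_H(1_S)$ lies in $T^H$, and being a central idempotent of $T$ by Proposition \ref{pro3.1}(v) it is a central idempotent of the commutative ring $T^H$, so $T^He_H$ is an ideal of $T^H$.

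The substance of the proof is (G4), namely $T^H=\sum_{gH\in G/H}\beta_g(T^He_H)$. Since $H$ is normal and each $\beta_g$ is an automorphism of $T$, I would record that $\beta_g(T^H)=T^{gHg^{-1}}=T^H$; therefore $\beta_g(T^He_H)=T^H\beta_g(e_H)$, which depends only on the coset $gH$ (as $\beta_h(e_H)=e_H$ for $h\in H$) and is an ideal of $T^H$ generated by the central idempotent $\beta_g(e_H)$. Consequently $\sum_{gH}\beta_g(T^He_H)=T^H\bigl(1_T-\prod_{gH\in G/H}(1_T-\beta_g(e_H))\bigr)$, and the claim reduces to showing that this join idempotent is $1_T$, i.e.
\[
\prod_{gH\in G/H}\bigl(1_T-\beta_g(e_H)\bigr)=0.
\]

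The key computation is the following identity, obtained by telescoping the orthogonal idempotents $e_i$ of \eqref{losei}:
\[
e_H=\sum_{i=1}^{m}e_i=1_T-\prod_{h\in H}\bigl(1_T-\beta_h(1_S)\bigr),
\]
from which
\[
1_T-\beta_g(e_H)=\beta_g\!\Bigl(\prod_{h\in H}\bigl(1_T-\beta_h(1_S)\bigr)\Bigr)=\prod_{h\in H}\bigl(1_T-\beta_{gh}(1_S)\bigr)=\prod_{x\in gH}\bigl(1_T-\beta_x(1_S)\bigr).
\]
Because the cosets partition $G$, multiplying over a transversal reassembles the product over all of $G$, so $\prod_{gH}(1_T-\beta_g(e_H))=\prod_{x\in G}(1_T-\beta_x(1_S))$. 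Finally, axiom (G4) for the globalization $(T,\beta)$ of $(S,\alpha)$ gives $T=\sum_{x\in G}\beta_x(S)=\sum_{x\in G}\beta_x(1_S)\,T$, so the join of the central idempotents $\beta_x(1_S)$ equals $1_T$ and $\prod_{x\in G}(1_T-\beta_x(1_S))=0$. This establishes (G4) and completes the verification that $(T^H,\beta_{G/H})$ is an enveloping action of $(T^He_H,\gamma_{G/H})$.

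I expect the main obstacle to be precisely the identity $e_H=1_T-\prod_{h\in H}(1_T-\beta_h(1_S))$ together with the coset bookkeeping: one must recognize the telescoping sum defining $e_H$ as the complement of a product over $H$, and then see that the product of these complements over a transversal of $G/H$ recovers the product over all of $G$, where the globalization hypothesis forces the vanishing. The remaining axioms (G1)--(G3) are essentially built into the definitions and into Proposition \ref{pro3.1}, so they should cost little effort.
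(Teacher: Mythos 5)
Your proof is correct, and it follows the same skeleton as the paper's: (G1)--(G3) are immediate from the construction \eqref{par1-action}--\eqref{par2-action} and Proposition \ref{pro3.1}, and all the real work goes into (G4), which you, like the paper, reduce via normality of $H$ (so that $\beta_g(T^H)=T^H$) to showing that the ideal $I=\sum_{gH}T^H\beta_g(e_H)$ of $T^H$ contains $1_T$. Where you genuinely differ is in the idempotent combinatorics. The paper argues \emph{additively}: it orders $G$ along the cosets $g_kH$ of a transversal, quotes from \cite{DFP} the orthogonal decomposition $1_T=\sum_{k,i}e_{(k,i)}$ of \eqref{sum1t}, and verifies block by block that $\sum_i e_{(k,i)}=f_H\beta_{g_2}(f_H)\cdots\beta_{g_{k-1}}(f_H)\beta_{g_k}(e_H)\in I$ as in \eqref{loseik}, where $f_H=\prod_{h\in H}(1_T-\beta_h(1_S))$ is exactly your $1_T-e_H$. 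You argue \emph{multiplicatively}: telescoping the $e_i$ of \eqref{losei} gives the identity $e_H=1_T-f_H$ (which the paper uses only implicitly), hence $1_T-\beta_g(e_H)=\prod_{x\in gH}(1_T-\beta_x(1_S))$, the product over a transversal reassembles $\prod_{x\in G}(1_T-\beta_x(1_S))$ because the cosets partition $G$, and this product vanishes since (G4) for the globalization $(T,\beta)$ exhibits $1_T$ in the ideal generated by the central idempotents $\beta_x(1_S)$. The two computations are equivalent --- the paper's block identity \eqref{loseik} is precisely the telescoped, additive form of your product of complements --- but your organization buys two small things: it derives the needed covering fact directly from axiom (G4) of $(T,\beta)$ instead of citing the decomposition from \cite{DFP}, and it isolates the clean reusable lemma that, for commuting central idempotents (automatic here, the rings being commutative), a finite sum of the ideals they generate is the ideal of their join, so that $1_T\in I$ is literally the single equation $\prod_{gH}(1_T-\beta_g(e_H))=0$. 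All steps check out, including the point the paper leaves tacit, namely that $\beta_g(e_H)$ depends only on the coset $gH$ because $e_H\in T^H$.
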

\begin{proof} By  construction, (G1), (G2) and (G3) of $\S$ 2.1 are satisfied. In order to prove (G4) it is enough to check that $T^H=\sum\limits_{i=1}^l\beta_{g_i}(T^He_H),$ where  $\mathcal{T}=\{g_1=1, g_2,\ldots,g_l\}$ is a transversal of $H$ on $G.$ Let $H=\{h_1=1,h_2,\dots , h_m\},$  and  write the elements of $G$ in the following order
\begin{equation*}
G=\{1,h_2,\dots , h_m,g_2,g_2h_2,\dots , g_2h_m\dots g_l, g_lh_2,\dots , g_lh_m\}.
\end{equation*} Since  $H$ is normal in $G$ we have that  $\bt_g(T^H)=T^H$ for all $g\in G,$ and $$I=\sum\limits_{i=1}^l\beta_{g_i}(T^He_H)=\sum\limits_{i=1}^lT^H\beta_{g_i}(e_H)$$  is an ideal of $T^H,$ we  shall check that $1_T\in I.$  
Write $f_H=\prod\limits_{i=1}^m(1_T-\bt_{h_i}(1_S))$ then $f_H\in T^H$ and   for $g\in G $ we conclude that $\bt_{g}(f_H)\in  T^H.$ 
 Now denote by $e_{(i,j)}\in T$ the idempotent  in \eqref{losei} corresponding to the element $g_i h_j\in G,$ by \cite[P. 79]{DFP} we have that 
\begin{equation}\label{sum1t}
1_T=\sum\limits_{i=1}^{m}e_{(1,i)} +\sum\limits_{i=1}^{m}e_{(2,i)}+\dots +\sum\limits_{i=1}^{m}e_{(l,i)}.
\end{equation}
But $\sum\limits_{i=1}^{m}e_{(1,i)}=e_H\in I,$  $e_{(2,1)}=f_H\bt_{g_2}(1_S)=f_H\bt_{g_2}(e_{(1,1)})$ and  for $2\leq i\leq m$
\begin{align*}
 e_{(2,i)}&=f_H(1_T-\bt_{g_2}(1_S))(1_T-\beta_{g_2h_2}(1_S))\dots (1_T-\beta_{g_2h_{i-1}}(1_S))\beta_{g_2h_i}(1_S)\\
&=f_H\bt_{g_2}(1_T-(1_S))(1_T-\beta_{h_2}(1_S))\dots (1_T-\beta_{h_{i-1}}(1_S))\beta_{h_i}(1_S)\\
&=f_H\bt_{g_2}(e_{(1,i)}),
\end{align*}
from this we get  $\sum\limits_{i=1}^{m}e_{(2,i)}=f_H\bt_g(e_H)\in I,$
analogously $e_{(3,i)}=f_H\bt_{g_2}(f_H)\bt_{g_3}(e_{(1,i)}),$ for $1\leq i\leq m,$  and $\sum\limits_{i=1}^{m}e_{(3,i)}=f_H\bt_{g_2}(f_H)\bt_{g_3}(e_{H}),$ using this argument and the construction of the idempotents $e_{(i,j)}$ it is straighforward to see that
\begin{equation}\label{loseik}\sum\limits_{i=1}^{m}e_{(k,i)}=f_H\bt_{g_2}(f_H)\dots\bt_{g_{k-1}}(f_H) \bt_{g_k}(e_{H})\in I, \quad 2\leq k\leq  l.\end{equation}
 Using equations\eqref{sum1t} and \eqref{loseik} we get $1_T\in I,$ as desired. 
\end{proof}

\begin{cor}\label{cor-action-quotient} Let $\gamma_{G/H}$ be the partial action of $G/H$ on $T^He_H$ given by \eqref{par1-action} and \eqref{par2-action}. Then:
\begin{enumerate}[\rm (i)]
	\item  $\psi_{G/H}: T^H\to T^H$ is a $T^G$-linear homomorphism of $k$-algebras whose  restriction to $T^He_H$ is injective and such that $\psi_{G/H}(e_H)=1_T=\psi_G(1_S)$. \smallbreak
	\item $\psi_{G/H}\left( (T^He_H)^{\gamma_{G/H}} \right)\subset  (T^H)^{G/H}=T^G$. \smallbreak
	\item The restriction of $\psi_{G/H}$ to $(T^He_H)^{\gamma_{G/H}}$ is a $k$-algebra isomorphism of $(T^He_H)^{\alpha_{G/H}}$ onto $T^G=(T^H)^{G/H}$ with inverse given by the multiplication by $e_H$. Particularly, $T^Ge_H=(T^He_H)^{\gamma_{G/H}}$.\smallbreak
	\item $(T^He_H)^{\gamma_{G/H}} \subset T^He_H$ is a partial Galois extension with Galois group $G/H$ if and only if $T^G=(T^H)^{G/H}\subset T^H$ is a Galois extension with Galois group $G/H$.
\end{enumerate}		
\end{cor}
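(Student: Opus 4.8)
The plan is to notice that the quotient situation is formally a copy of the original one: the pair $(T^He_H,\gamma_{G/H})$ sits inside $(T^H,\beta_{G/H})$ exactly as $(S,\alpha)$ sits inside $(T,\beta)$. Indeed, $e_H$ is a central idempotent of $T^H$ with $T^He_H=(T^H)e_H$ unital, $\gamma_{G/H}$ is a unital partial action of the finite group $G/H$ on $T^He_H$ whose ideals are the unital ideals $D_{gH}=T^He_{gH}$ of \eqref{par1-action}, and by Proposition \ref{lem2} the global action $\beta_{G/H}$ is a globalization of $\gamma_{G/H}$. Hence every standing hypothesis preceding Proposition \ref{pro3.1} holds for the dictionary
\[
G\leftrightarrow G/H,\quad T\leftrightarrow T^H,\quad S\leftrightarrow T^He_H,\quad 1_S\leftrightarrow e_H,
\]
and I would apply Proposition \ref{pro3.1} in this relativized setting, taking the ``subgroup'' to be the whole group $G/H$. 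I would also use that by \cite[Theorem 2.3]{CHR} one has $(T^H)^{G/H}=T^G$, so the base ring is played by $T^G$.

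With this dictionary, items (i)--(iii) are direct translations of Proposition \ref{pro3.1}. Part (i) follows from (i), (iii), (iv) of Proposition \ref{pro3.1} (the $T^G$-linearity being the $(T^H)^{G/H}$-linearity there, and injectivity on $T^He_H$ being injectivity of $\psi_H|_S$), together with (ii) of Proposition \ref{pro3.1} applied twice: for the full group $G/H$ it gives $\psi_{G/H}(e_H)=1_{T^H}=1_T$, and for the original data with $H=G$ it gives $\psi_G(1_S)=1_T$. Part (ii) is (vi) of Proposition \ref{pro3.1}, read through $(T^H)^{G/H}=T^G$. Part (iii) is (vii) of Proposition \ref{pro3.1}: the target $(T^H)^{G/H}\psi_{G/H}(e_H)$ collapses to $T^G\cdot 1_T=T^G$ by part (i), the inverse is multiplication by $e_H$, and the ``in particular'' clause $T^H1_S=S^{\alpha_H}$ translates to $T^Ge_H=(T^He_H)^{\gamma_{G/H}}$ (here $\gamma_{G/H}$ and $\alpha_{G/H}$ name the same partial action).

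Part (iv) is the only point not covered by Proposition \ref{pro3.1}, and I expect it to be the main obstacle. Here one compares a partial Galois extension to its globalization: since $\beta_{G/H}$ is, by Proposition \ref{lem2}, a globalization of the unital partial action $\gamma_{G/H}$ on $T^He_H$, the extension $(T^He_H)^{\gamma_{G/H}}\subset T^He_H$ should be partial Galois with group $G/H$ if and only if the global extension $(T^H)^{G/H}\subset T^H$ is Galois with group $G/H$. I would deduce this from the equivalence established in \cite{DFP} between a unital partial action defining a partial Galois extension and its globalization defining a global Galois extension, applied to the data $(T^He_H,\gamma_{G/H})$ and $(T^H,\beta_{G/H})$; the work consists in checking that partial Galois coordinates of $T^He_H$ over $(T^He_H)^{\gamma_{G/H}}$ correspond, through the globalization, to Galois coordinates of $T^H$ over $T^G$, and conversely. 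Combined with $(T^H)^{G/H}=T^G$, this yields the stated equivalence.
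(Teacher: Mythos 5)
Your proposal is correct and follows essentially the same route as the paper: the paper's proof likewise obtains items (i)--(iii) by applying Proposition \ref{pro3.1} to the relativized data $(T^H,\beta_{G/H})$ as globalization of $(T^He_H,\gamma_{G/H})$, and deduces item (iv) from Proposition \ref{lem2} together with \cite[Theorem 3.3]{DFP}, which is exactly the partial-versus-global Galois equivalence you invoke. Your only addition is to spell out the dictionary and verify the standing hypotheses (unitality of the ideals $D_{gH}$, $e_H$ central, $(T^H)^{G/H}=T^G$), which the paper leaves implicit.
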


\begin{proof}
	Itens (i), (ii) and (iii) follow directly from Proposition \ref{pro3.1}, while item (iv) follows from Lemma \ref{lem2} and \cite[Theorem 3.3]{DFP}.
\end{proof}

We shall see that the partial action $\gamma_{G/H}$ on $T^He_H$ induces a partial action $\alpha_{G/H}$ of $G/H$ on $S^{\alpha_H}$ via  multiplication by $1_S$.
Indeed, 
we define
\begin{align}
\label{idempgi}&\tilde{1}_{gH}:=e_{gH}1_S=e_H\beta_{g}(e_H)1_S=\beta_{g}(e_H)1_S,\\[.2em]
\label{dprima} &\quad \tilde{D}_{gH}:=D_{gH}1_S=T^He_{gH}1_S=S^{\alpha_H}\tilde{1}_{g},\\[.2em]
\label{alfaprima}&\qquad \alpha_{gH}:=(m_{1_S}\circ \gamma_{gH}\circ \psi_H)|_{\tilde{D}_{g^{-1}H}},
\end{align}
where $m_{1_S}:T^He_H\to S^{\alpha_H}$ is the multiplication map by $1_S$.

\begin{prop}\label{pro1.2.6.} Let $H$ be a normal subgroup of $G$, 
$\tilde{D}_{gH}$ and $\alpha_{gH}$ given respectively by \eqref{dprima} and \eqref{alfaprima}. Then
\begin{enumerate}[\rm (i)]
		\item $\alpha_{G/H}=\left(\tilde{D}_{gH},\alpha_{gH}\right)_{gH\in G/H}$ is a partial action of $G/H$ on $S^{\alpha_H}$,
		\item  $(S^{\alpha_H})^{\alpha_{G/H}}=S^{\alpha}$, \smallbreak
		\item $(T^H, \beta_{G/H})$ is the globalization of $\alpha_{G/H}$,\smallbreak
		\item $S^{\alpha}\subset S^{\alpha_H}$ is a partial Galois extension with Galois group $G/H$  if and only if $T^{G}\subset T^H$ is a Galois extension with Galois group $G/H$.
	\end{enumerate}
\end{prop}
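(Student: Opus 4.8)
The plan is to treat the whole proposition as a transport of structure through the isomorphism $\psi_H$. By Proposition \ref{pro3.1}(vii) the maps $\psi_H|_{S^{\alpha_H}}\colon S^{\alpha_H}\to T^He_H$ and $m_{1_S}\colon T^He_H\to S^{\alpha_H}$ are mutually inverse $k$-algebra isomorphisms, and $\gamma_{G/H}=(D_{gH},\gamma_{gH})$ is already known to be a partial action of $G/H$ on $T^He_H$ with enveloping action $(T^H,\beta_{G/H})$ (Proposition \ref{lem2}). First I would record the one computation that drives everything: since $D_{gH}=T^He_{gH}\subseteq T^He_H$ and $\tilde D_{gH}=D_{gH}1_S=m_{1_S}(D_{gH})$, applying $\psi_H$ gives $\psi_H(\tilde D_{gH})=D_{gH}$; hence from the definition \eqref{alfaprima} of $\alpha_{gH}$ together with $\psi_H\circ m_{1_S}=\id$ one obtains the intertwining relation
\[
\psi_H\circ\alpha_{gH}=\gamma_{gH}\circ\psi_H\qquad\text{on }\tilde D_{g^{-1}H},
\]
for every $gH\in G/H$. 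In other words $\psi_H$ is an isomorphism carrying the datum $(\tilde D_{gH},\alpha_{gH})$ onto the partial action $(D_{gH},\gamma_{gH})$, and this single fact feeds all four items.

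Item (i) then follows at once: each $\tilde D_{gH}=m_{1_S}(D_{gH})$ is the image of an ideal of $T^He_H$ under an algebra isomorphism, hence an ideal of $S^{\alpha_H}$, and each $\alpha_{gH}\colon\tilde D_{g^{-1}H}\to\tilde D_{gH}$ is a composite of isomorphisms, hence itself an isomorphism; the axioms (P1)--(P4) for $\alpha_{G/H}$ are the images under $m_{1_S}$ of the corresponding axioms for $\gamma_{G/H}$, which hold because $\gamma_{G/H}$ is a partial action. For item (ii), the intertwining relation shows that $\psi_H$ restricts to a bijection between the invariant subalgebras, so $(S^{\alpha_H})^{\alpha_{G/H}}=m_{1_S}\big((T^He_H)^{\gamma_{G/H}}\big)$; by Corollary \ref{cor-action-quotient}(iii) this equals $m_{1_S}(T^Ge_H)=T^Ge_H1_S=T^G1_S$, and $T^G1_S=S^{\alpha_G}=S^{\alpha}$ by Proposition \ref{pro3.1}(vii) applied with $H=G$ (so that $\alpha_G=\alpha$).

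For item (iii) I would argue by uniqueness of the globalization. Proposition \ref{lem2} already gives that $(T^H,\beta_{G/H})$ is an enveloping action of $(T^He_H,\gamma_{G/H})$; since $\alpha_{G/H}$ is, via $\psi_H$, isomorphic to $\gamma_{G/H}$, the same global action $(T^H,\beta_{G/H})$ is a globalization of $\alpha_{G/H}$ once $S^{\alpha_H}$ is identified with the ideal $T^He_H$ of $T^H$ through $\psi_H$. Finally, item (iv) is obtained by transporting the notion of partial Galois extension along $\psi_H$: if $x_i,y_i$ are partial Galois coordinates for one side, then their images are coordinates for the other, since $\psi_H$ is an algebra isomorphism intertwining the two partial actions and sending each idempotent $\tilde 1_{gH}$ to $e_{gH}$. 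Combining this equivalence with item (ii), the extension $S^{\alpha}\subset S^{\alpha_H}$ is partial Galois with group $G/H$ if and only if $(T^He_H)^{\gamma_{G/H}}\subset T^He_H$ is, and the latter is equivalent to $T^G\subset T^H$ being Galois with group $G/H$ by Corollary \ref{cor-action-quotient}(iv). I expect the main obstacle to be precisely the bookkeeping in (iii): phrasing \emph{globalization} correctly through the identification $\psi_H$, since $S^{\alpha_H}$ sits inside $S\subseteq T$ rather than literally inside $T^H$, so one must check (G1)--(G4) for $\beta_{G/H}$ relative to the ideal $\psi_H(S^{\alpha_H})=T^He_H$ rather than for $S^{\alpha_H}$ itself.
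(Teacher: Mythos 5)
Your proposal is correct and takes essentially the same route as the paper: the intertwining relation $\psi_H\circ\alpha_{gH}=\gamma_{gH}\circ\psi_H$ is exactly the paper's equation \eqref{otra}, items (i)--(iii) are obtained there too by transporting (P1)--(P4) and the globalization axioms through the isomorphism $\psi_H$, including your precise point that (G1)--(G4) must be checked for the ideal $\psi_H(S^{\alpha_H})=T^He_H$ rather than for $S^{\alpha_H}$ itself. Your derivation of (iv) by transporting Galois coordinates to reduce to Corollary \ref{cor-action-quotient}(iv) is only cosmetically different from the paper's appeal to (iii) together with \cite[Theorem 3.3]{DFP}, since that corollary rests on the same theorem.
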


\begin{proof}
	(i) By \eqref{idempgi} we know that $\tilde{1}_{g_iH}$ is a central idempotent of $T.$ Now we check that $\tilde{1}_{gH}\in S^{\alpha_H}.$  Let   $h\in H,$ then taking into account (vi) of   Proposition \ref{pro3.1} and  the fact that $e_H=\psi_H(1_S)$ we have 
	\begin{align*}
	\af_h(\tilde{1}_{gH}1_{h\m})&=\af_h(\beta_{g}(e_H)1_{h\m})=\bt_{hg}(e_H)1_{h}
	=\bt_{gh'}(e_H)1_{h}=\beta_{g}(e_H)1_S1_h=\tilde{1}_{gH}1_h,
	\end{align*} 
	then $\tilde{D}_{g_iH}$ is an ideal of $S^{\alpha_H}$. Furthermore, 
Now by (i) and (iii) of Proposition \ref{pro3.1} we get 
	\begin{equation}\label{impsi}\psi_H(S^{\alpha_H}\tilde{1}_{gH})=T^He_{g\m H},\end{equation} which is the domain of $\gamma_{gH},$ then 
	
\begin{align*}
	\alpha_{gH}(\tilde{D}_{g\m H})\stackrel{\eqref{impsi}}=m_{1_S}\circ \gamma_{gH}(T^He_{g\m H})=T^He_{gH}1_S=S^{\alpha_H}\beta_{g}(e_H)1_S\stackrel{\eqref{dprima}}=\tilde{D}_{gH}.
	\end{align*} 
	Thus $\alpha_{gH}:\tilde{D}_{g\m H}\to \tilde{D}_{g H}$ is well defined and is a ring isomorphism as it is composition of isomorphisms and  (P1) of \S 2.1. is satisfied,  now condition (P2) is clear. To check (P3) take $gH, l H\in G/H,$ then

	\begin{align*} 
		\alpha_{gH}(\tilde{D}_{g_{-1}H}\cap \tilde{D}_{lH})&=(m_{1_S}\circ \gamma_{gH}\circ \psi_H)(S^{\alpha_H}\tilde{1}_{g\m H}\tilde{1}_{lH})\\
		&\stackrel{\eqref{impsi}}=(m_{1_S}\circ \gamma_{gH})(T^He_{g\m H}e_{l H})\\
		&=T^He_{gH}1_ST^He_{l H}1_S\\
		&=\tilde{D}_{g_H}\cap \tilde{D}_{glH}.
		\end{align*}

Finally, to check (P4) take  $x\in \tilde{D}_{l\m H}\cap \tilde{D}_{(gl)\m H}$. Then,
		\begin{align*}
		\alpha_{gH}\circ \alpha_{lH}(x)&=(m_{1_S}\circ \gamma_{gH}\circ\psi_H)\circ(m_{1_S}\circ \gamma_{hH}\circ\psi_H)(x)\\
		&=(m_{1_S}\circ (\gamma_{gH}\circ \gamma_{lH})\circ \psi_H)(x)\\
		&\stackrel{\eqref{impsi}}=(m_{1_S}\circ \af_{glH}\circ \psi_H)(x)\\
		&=\alpha_{glH}(x).
		\end{align*}

	(ii) First of all observe that 
\begin{equation}\label{otra}\psi_H\circ \alpha_{gH}=\gamma_{gH}\circ \psi_H \quad\text{on}\quad\tilde{D}_{g\m_H} \quad\text{ for all} \,\,g\in H.\end{equation}
 Moreover since $\tilde{1}_{g\m H}\in S^{\alpha_H},$    then $\psi_H(\tilde{1}_{g\m H})=\beta_{g\m}(e_H)=\beta_{g\m}(e_H)e_H=e_{g\m H}.$ This implies $\psi_H((S^{\alpha_H})^{\alpha_{G/H}})=\psi_H(S^{\alpha_H})^{\gamma_{G/H}}.$ Indeed, let $s\in S^{\alpha_H}$ such that $ \psi_H(s)\in\psi_H(S^{\alpha_H})^{\gamma_{G/H}}.$ Then 
	\begin{align*}
	\af_{gH}(s\tilde{1}_{g\m H})&=\psi_H(\af_{gH}(s\tilde{1}_{g\m H}))1_S=\gamma_{gH}\circ \psi_H(s\tilde{1}_{g\m H}))1_S
	\\&\stackrel{\eqref{impsi}}=\gamma_{gH}(\psi_H(s)e_{g\m H})1_S=\psi_H(s)e_{g H}1_S=s\tilde{1}_{g H},
	\end{align*}
	and $s\in S^\alpha_{G/H},$ thus $\psi_H(S^{\alpha_H})^{\gamma_{G/H}}\subseteq \psi_H((S^{\alpha_H})^{\alpha_{G/H}}),$ in an analogous way one shows the other inclusion. Then 
	\begin{align*}(S^{\alpha_H})^{\alpha_{G/H}}&\stackrel{(vii)  Prop. \ref{pro3.1}}=\psi_H((S^{\alpha_H})^{\alpha_{G/H}})1_S=\psi_H(S^{\alpha_H})^{\gamma_{G/H}}1_S
	\\&
	=(T^He_H)^{\gamma_{G/H}}1_S\stackrel{iii)  Cor.\ref{cor-action-quotient}}=T^Ge_H1_S=T^G1_S=S^{\alpha},\end{align*}
	as desired.

	(iii) By (vii) of  Proposition $\ref{pro3.1}$, there is a ring monomorphism $\psi_H: S^{\alpha_H}\to  T^H,$ such that $\psi_H(S^{\alpha_H})=T^He_H$ is an ideal of $T^H.$  Moreover
	
	\begin{enumerate}
		
		\item (G2) The equality $\psi_H(\tilde{D}_{g_H})=\psi_H(S^{\alpha_H})\cap \beta_{g}(\psi_H(S^{\alpha_H})),$ follows from \eqref{impsi}.

		\item (G3) It follows by \eqref{par2-action} and \eqref{otra}  that $\psi_H\circ \alpha_{gH}=\bt_{gH}\circ \psi_H$ on $\tilde{D}_{g_H}$.

		\item  (G4) Finally using    Proposition \eqref{lem2} that
$$ T^H=\sum\limits_{gH\in G/H \mathcal T}\beta_{gH}(T^He_H)=\sum\limits_{g_i\in \mathcal T} \beta_{g_i}(\psi_H(S^{\alpha_H})),$$ 
	\end{enumerate}
	and the result follows  using  \cite[Definition 4.2]{DE}.

	(iv) This follows from (iii) and  \cite[Theorem 3.3]{DFP}.
\end{proof}

	{\it Proof of Theorem \ref{teofundpar2}.}  This is a consequence of  iv) of  Proposition \ref{pro1.2.6.} and  item (ii) of   \cite[Theorem 2.3]{CHR}.

\section{Enveloping actions  and partial $G$-isomorphisms}\label{enve}

In this section we will fix a commutative ring $R$  and work with partial Galois extensions of $R$ with Galois group $G.$

The trace map plays an important role when having partial  actions of finite groups on algebras, and for a partial action $(S,\alpha)$ is defined as follows $tr_{S/R}(s)=\sum\limits_{g \in G}\alpha_{g}(s1_{g^{-1}}),$ for all $s\in S,$ by \cite[Lemma 2.1]{DFP} $tr_{\alpha}\colon S\to R$  is a $R$-linear map. 

\begin{defn}\label{pariso}
	We say that two partial Galois extensions $(S, \alpha)$ and $(S',\alpha')$ of $R$ are called partially \emph{$G$-isomorphic}, and denoted $(S,\alpha)\overset{par}{\sim} (S',\alpha')$, if there is a $R$-algebra isomorphism $f: S\rightarrow S'$ such that for all $g \in G$:
	\begin{enumerate}
		\item [(i)] $f(S_g)\subseteq S'_g$,
		\item [(ii)]$(f \circ \alpha_g)|_{S_{g^{-1}}} = (\alpha'_g \circ f)|_{S_{g^{-1}}}$.
	\end{enumerate}
	
\end{defn}
The relation $\overset{par}{\sim}$ defined above is an equivalence relation. We denote by $[S,\alpha]$ the equivalence class of $(S,\alpha)$ relative to $\overset{par}{\sim}.$
Next result implies that  we may only require  that the map $f$  to be a $R$-algebra homomorphism. 

\begin{prop}\label{pro9}
	Let $(S,\alpha)$ and $(S',\alpha')$ be partial Galois extension of $R.$ Then $(S, \alpha)$ and $(S',\alpha')$  are partially \emph{$G$-isomorphic}, if and only if, there is an $R$-algebra homomorphism $f: S\rightarrow S'$ satisfying {\rm (i)} and {\rm (ii)} above.
\end{prop}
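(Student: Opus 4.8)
The forward implication is trivial, as an $R$-algebra isomorphism satisfying (i) and (ii) is in particular such a homomorphism. For the converse the plan is to fix an $R$-algebra homomorphism $f\colon S\to S'$ satisfying (i) and (ii) and to show that $f$ is automatically bijective, whence it is a partial $G$-isomorphism. I would work with the partial Galois coordinates $x_i,y_i\in S$ of $S$ over $R=S^{\alpha}$ and $x'_j,y'_j\in S'$ of $S'$ over $R$, and I would repeatedly use that by \cite[Lemma 2.1]{DFP} the maps $tr_{S/R}(s)=\sum_{g\in G}\alpha_g(s1_{g\m})$ and $tr_{S'/R}(s')=\sum_{g\in G}\alpha'_g(s'1'_{g\m})$ take values in $R$, and that $f$ fixes $R$ pointwise. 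The starting point is the reconstruction identity
\begin{equation*}
s=\sum_i x_i\,tr_{S/R}(y_i s),\qquad s\in S,
\end{equation*}
obtained by substituting $\sum_i x_i\alpha_g(y_i1_{g\m})=\delta_{1,g}$ into the definition of the trace; the same identity holds in $S'$.

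First I would prove that $f$ is injective. If $f(s)=0$, then since $tr_{S/R}(y_i s)\in R$ is fixed by $f$, expanding the trace and applying (ii) to the elements $y_i s1_{g\m}\in S_{g\m}$ gives
\begin{equation*}
tr_{S/R}(y_i s)=f\bigl(tr_{S/R}(y_i s)\bigr)=\sum_{g\in G}\alpha'_g\bigl(f(y_i)f(s)f(1_{g\m})\bigr)=0,
\end{equation*}
because $f(s)=0$. Plugging $tr_{S/R}(y_i s)=0$ into the reconstruction identity yields $s=0$, so $f$ is injective.

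For surjectivity I would exhibit an explicit candidate inverse, the $R$-linear map $h\colon S'\to S$ given by $h(s')=\sum_i x_i\,tr_{S'/R}(f(y_i)s')$, which is well defined because each coefficient lies in $R\subseteq S$. Applying $f$ to the defining relation of the coordinates of $S$ and using (i) and (ii) produces $\sum_i f(x_i)\alpha'_g\bigl(f(y_i)f(1_{g\m})\bigr)=\delta_{1,g}1_{S'}$; feeding this into $f\bigl(h(s')\bigr)$ reduces the computation of $f\circ h$ to comparing the idempotent $f(1_{g\m})$ with $1'_{g\m}$ for each $g$. Concretely, $f\circ h=\id_{S'}$ will follow as soon as $tr_{S'/R}\circ f=tr_{S/R}$ on $S$, which in turn holds once $f(1_g)=1'_g$ for every $g\in G$. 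Granting this, $h$ is a two-sided inverse of $f$ and $f$ is the desired $R$-algebra isomorphism.

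The hard part will be precisely the identity $f(1_g)=1'_g$, equivalently the trace compatibility $tr_{S'/R}\circ f=tr_{S/R}$: condition (i) only yields $f(1_g)\le 1'_g$ as idempotents of $S'_g$, and it is the reverse relation that drives surjectivity. I would try to establish it by using (i) and (ii) together with the Galois coordinates of \emph{both} extensions, transporting the partial action of $S$ along the injection $f$ so as to view $f(S)$ as a partial Galois subextension of $S'$ over $R$ and then forcing its structural idempotents to agree with those of $S'$; an alternative route is to extend $f$ to a $G$-equivariant homomorphism $(T,\beta)\to(T',\beta')$ of the globalizations by functoriality of the enveloping action and to read off the idempotents from $1_g=\beta_g(1_S)1_S$. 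This idempotent-matching step carries the genuine content of the statement; once it is secured, the trace identities above close the proof.
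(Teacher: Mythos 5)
Your forward implication and your injectivity argument are fine (the latter is exactly the paper's: from $f(s)=0$ one gets $f\bigl(tr_{\alpha}(y_is)\bigr)=\sum_{g}\alpha'_g\bigl(f(y_i)f(s)f(1_{g^{-1}})\bigr)=0$, and since $tr_{\alpha}(y_is)\in R$ is fixed by $f$, the reconstruction identity gives $s=0$). The gap is in surjectivity, and you have located it precisely: with your candidate $h(s')=\sum_i x_i\,tr_{\alpha'}(f(y_i)s')$ everything reduces to the identity $f(1_g)=1'_g$, which you leave unproven. The trouble is that this identity is \emph{not} derivable from (i) and (ii), so neither of your two proposed routes (transporting the action along $f$, or globalizing functorially) can close the gap: with (i) read as the inclusion $f(S_g)\subseteq S'_g$, the statement itself fails. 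Concretely, let $G=\{1,g\}$ have order two, let $S=R$ carry the partial action with $S_g=\{0\}$ (so $1_g=0$); this is a partial Galois extension of $R$ with coordinates $x_1=y_1=1$ and $S^{\alpha}=R$, and such zero ideals are allowed in this framework (the paper's own Example 5.13 has $S'_{g^2}=\{0\}$). Let $S'=R\times R$ with the global swap $\alpha'_g(a,b)=(b,a)$, a Galois extension of $R$. The unit map $f\colon R\to R\times R$, $r\mapsto(r,r)$, is a unital $R$-algebra homomorphism, (i) holds since $f(S_g)=\{0\}\subseteq S'_g$, and (ii) holds vacuously on $S_{g^{-1}}=\{0\}$; yet $f$ is injective but not surjective, and indeed $f(1_g)=0\neq(1,1)=1'_g$. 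So the idempotent-matching step you call the genuine content is not merely hard: it must be \emph{assumed}, e.g.\ by requiring $f(1_g)=1'_g$ for all $g$, or equivalently strengthening (i) to the equality $f(S_g)=S'_g$, which forces $f(1_g)$ to be the identity of $S'_g$.

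It is worth saying that your diagnosis exposes the same soft spot in the paper's own proof, which otherwise follows your plan verbatim, down to the same candidate preimage. In the displayed surjectivity computation the paper expands the trace as $\sum_{g}\alpha'_g\bigl(f(y_i1_{g^{-1}})\bigr)\alpha'_g(s'1'_{g^{-1}})$, i.e.\ it silently replaces $f(y_i)1'_{g^{-1}}$ by $f(y_i1_{g^{-1}})=f(y_i)f(1_{g^{-1}})$, which is exactly the unproved identity $f(1_{g^{-1}})=1'_{g^{-1}}$ (note also that the first line's $tr_{\alpha'}\bigl(f(y_i1_{g^{-1}})s'\bigr)$ has the dummy $g$ occurring free; the element must be $\sum_i x_i\,tr_{\alpha'}(f(y_i)s')$ so that the coefficients lie in $R$ and $f$ can be pulled inside). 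In the counterexample above the paper's candidate preimage of $s'=(a,b)$ is $tr_{\alpha'}\bigl((1,1)(a,b)\bigr)=a+b$, whose image $(a+b,a+b)$ is not $s'$. Once the hypothesis $f(1_g)=1'_g$ is added, $f(y_i1_{g^{-1}})=f(y_i)1'_{g^{-1}}$, the images of the Galois coordinates of $S$ serve as coordinates inside $S'$, and both your argument and the paper's go through; without it, the proposition needs to be restated.
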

\begin{proof}
	Note that $1_S=1_R=1_{S'}$. Let $x_i, y_i$, $1\leq i \leq n$ be partial Galois coordinates of $S$ over $R$ relative to $\alpha$.
	Take $s'\in S'$ then:
	\begin{align*}
	f\left(\sum_{i=1}^nx_itr_{\alpha'}(f(y_i1_{g^{-1}})s')\right)&=\sum_{i=1}^nf(x_i)tr_{\alpha'}(f(y_i1_{g^{-1}})s')\\
	&=\sum_{i=1}^nf(x_i)\sum_{g \in G}\alpha'_g(f(y_i1_{g^{-1}}))\alpha'_g(s'1'_{g^{-1}})\\
	&=\sum_{i=1}^nf(x_i)\sum_{g \in G}(\alpha'_g\circ f)(y_i1_{g^{-1}})\alpha'_g(s'1'_{g^{-1}})\\
	&=\sum_{i=1}^nf(x_i)\sum_{g \in G}(f\circ \alpha_{g})(y_i1_{g^{-1}})\alpha'_g(s'1'_{g^{-1}})\\
	&=\sum_{g \in G}f\left(\sum_{i=1}^nx_i\alpha_{g}(y_i1_{g^{-1}})\right)\alpha'_g(s'1'_{g^{-1}})\\
	&=\sum_{g \in G}\delta_{1,g}1_S\alpha'_g(s'1'_{g^{-1}})=s'1_S=s',
	\end{align*}
	and $f$ is surjective.

	To prove that $f$ is injective take  $s\in S$ be such that $f(s)=0,$ we have
	
	\begin{align*}
	f(\alpha_{g}(y_is1_{g^{-1}}))&=(f\circ \alpha_{g})(y_is1_{g^{-1}})=(\alpha'_g\circ f)(y_is1_{g^{-1}})\\
	&=\alpha_g'(f(y_i)f(s)f(1_{g^{-1}}))=0,
	\end{align*}
	for all $g \in G$ and 
	$
	tr_\alpha(y_is)=f(tr_\alpha(y_is)1_S)=0,\,\, \text{for all}\,\, 1\leq i\leq n.
	$
	Then,
	\begin{align*}
	0&=\sum_{i=1}^nx_itr_\alpha(y_is)=\sum_{g \in G}\left(\sum_{i=1}^nx_i\alpha_{g}(y_i1_{g^{-1}})\right)\alpha_{g}(s1_{g^{-1}})\\
	&=\sum_{g \in G}\delta_{1,g}\alpha_{g}(s1_{g^{-1}})=s1_S=s,
	\end{align*}
	and $f$ is injective.
\end{proof}

Given two (global) actions $(T^1,\beta^1)$ and $(T^2,\beta^2),$ with  $(T^1)^G=A=(T^2)^G$ we recall from  \cite{H}
that $(T^1,\beta^1)$ and $(T^2,\beta^2)$ are  $G$-isomorphic if there is an $A$-algebra homomorphism $f:T^1\to T^2$ such that $f\circ\beta=\beta'\circ f$.

For our purposes, the concept of (global) $G$-isomorphism  for enveloping actions of partial action needs one more condition than the classical concept  of $G$-isomorphisms defined in \cite{H}.

\begin{defn}\label{globiso}
	Let $(T,\beta)$ and $(T',\beta')$ be enveloping actions of $(S,\alpha)$ and $(S',\alpha')$ respectively,  suppose that $T^G=A=T'^G$ . 
We say that $(T,\beta)$ and $(T',\beta')$ are globally $G$-isomorphic, and we denote $(T,\beta)\overset{gl}{\sim}(T',\beta')$, if they are $G$-isomorphic and  the map $f:T\to T'$ giving the $G$-isomorphism satisfies $f(1_S)=1_{S'}.$
	
\end{defn}

\begin{rem}\label{obsglobiso}
The relation $\overset{gl}{\sim}$ defined above is an equivalence relation, and the equivalence class of $(T,\beta)$ is denoted by $[T, \beta].$
\end{rem}

%
%

Now we give the main result of this section.
\begin{thm}\label{pro10}
	Assume that $(S,\alpha), (S', \alpha')$ are partial Galois extension of $R$ and, $(T, \beta)$ and $(T',\beta')$ be their enveloping actions, respectively. Let $H$ be a normal subgroup of $G,$ if $(T,\beta)$ and $(T',\beta')$ are globally $G$-isomorphic, then $(S^{\alpha_H},\alpha_{G/H})$ and $(S'^{\alpha'_H},\alpha'_{G/H})$ are partially $G/H$-isomorphic. In particular, $(S,\alpha)$ and $(S',\alpha')$ are partially $G$-isomorphic.
\end{thm}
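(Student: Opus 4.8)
The plan is to build the desired partial $G/H$-isomorphism directly from the global one by conjugating $f$ through the identifications $S^{\alpha_H}\cong T^He_H$ and $S'^{\alpha'_H}\cong T'^He'_H$ supplied by Proposition \ref{pro3.1}(vii), and then to invoke Proposition \ref{pro9} so that no bijectivity has to be checked by hand. First I would note that the $G$-equivariant $A$-algebra homomorphism $f\colon T\to T'$ (with $A=T^G=T'^G$) restricts to a homomorphism $T^H\to T'^H$, since $\beta'_h\circ f=f\circ\beta_h$ forces $f$ to carry $H$-invariants to $H$-invariants. The crucial point is that $f$ matches the idempotents: because $f(1_S)=1_{S'}$ (this is exactly the extra condition built into Definition \ref{globiso}) and $f(\beta_{h_i}(1_S))=\beta'_{h_i}(f(1_S))=\beta'_{h_i}(1_{S'})$, the formula \eqref{losei} yields $f(e_i)=e'_i$ for each $i$, hence $f(e_H)=e'_H$ and more generally $f(e_{gH})=e'_H\beta'_g(e'_H)=e'_{gH}$. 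Consequently $f$ restricts to a $k$-algebra homomorphism $T^He_H\to T'^He'_H$.

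Next I would set $\bar f:=m_{1_{S'}}\circ f\circ\psi_H\colon S^{\alpha_H}\to S'^{\alpha'_H}$, where $m_{1_{S'}}$ is the inverse of $\psi'_H$ by Proposition \ref{pro3.1}(vii); this is a $k$-algebra homomorphism. I would check it is an $R$-algebra homomorphism by verifying $\bar f|_R=\mathrm{id}$: writing $r\in R=T^G1_S$ as $r=a1_S$ with $a\in A$, the $A$-linearity of $f$ together with $f(1_S)=1_{S'}$ and $f(e_i)=e'_i$ gives $f(\psi_H(r))=\psi'_H(a1_{S'})$, and applying $m_{1_{S'}}$ returns $r$. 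Then I would verify the two conditions of Definition \ref{pariso} for $G/H$. Condition (i), $\bar f(\tilde D_{gH})\subseteq\tilde D'_{gH}$, follows from \eqref{impsi} together with $f(e_{g^{-1}H})=e'_{g^{-1}H}$, since $\psi_H$ sends $\tilde D_{gH}$ onto the ideal $T^He_{g^{-1}H}$, $f$ sends this into $T'^He'_{g^{-1}H}$, and $m_{1_{S'}}$ lands it in $\tilde D'_{gH}$. Condition (ii), $\bar f\circ\alpha_{gH}=\alpha'_{gH}\circ\bar f$, is obtained by combining the intertwining relation $\psi_H\circ\alpha_{gH}=\gamma_{gH}\circ\psi_H$ from \eqref{otra}, the identity $f\circ\gamma_{gH}=\gamma'_{gH}\circ f$ (immediate from $\gamma_{gH}=\beta_g|_{D_{g^{-1}H}}$ and $f\circ\beta_g=\beta'_g\circ f$), and $\psi'_H\circ m_{1_{S'}}=\mathrm{id}$, so that both sides collapse to $m_{1_{S'}}\circ\gamma'_{gH}\circ f\circ\psi_H$.

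Having produced an $R$-algebra homomorphism $\bar f$ satisfying (i) and (ii), I would apply Proposition \ref{pro9} to the partial Galois extensions $(S^{\alpha_H},\alpha_{G/H})$ and $(S'^{\alpha'_H},\alpha'_{G/H})$ of $R$ with group $G/H$ — which are partial Galois by Theorem \ref{teofundpar2} — to conclude at once that $\bar f$ is a partial $G/H$-isomorphism, with no separate invertibility check required. The final assertion is the special case $H=\{1\}$, where $S^{\alpha_H}=S$, $e_H=1_S$ and $\alpha_{G/H}=\alpha$; here $\bar f$ coincides with $f|_S$, giving $(S,\alpha)\overset{par}{\sim}(S',\alpha')$.

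The main obstacle I anticipate is the bookkeeping in step (ii): one must track the domains $\tilde D_{gH}$, $D_{gH}$ and their primed counterparts correctly under $\psi_H$ and $f$ (in particular the $g$ versus $g^{-1}$ indexing in \eqref{impsi}), so that the compositions are everywhere defined and the two sides genuinely coincide. It is precisely at this intertwining, and in the identity $f(e_H)=e'_H$, that the hypothesis $f(1_S)=1_{S'}$ — rather than a mere classical $G$-isomorphism as in \cite{H} — is indispensable, which explains why the stronger notion of Definition \ref{globiso} is the right one for passing to invariant subalgebras.
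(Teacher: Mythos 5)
Your proposal is correct and follows essentially the same route as the paper's own proof: both construct $\varphi=m_{1_{S'}}\circ f\circ\psi_H$, use $f(1_S)=1_{S'}$ and $f\circ\beta_g=\beta'_g\circ f$ to get $f(e_i)=e'_i$ and hence $f(e_{gH})=e'_{gH}$, verify conditions (i) and (ii) of Definition \ref{pariso} via \eqref{impsi} and the intertwining $\psi_H\circ\alpha_{gH}=\gamma_{gH}\circ\psi_H$, and then invoke Proposition \ref{pro9} to dispense with any bijectivity check, finishing with $H=\{1\}$ for the last claim. Your closing remark on the $g$ versus $g^{-1}$ bookkeeping and on where $f(1_S)=1_{S'}$ is genuinely needed accurately identifies the delicate points of the paper's argument as well.
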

\begin{proof} 
	Let $H=\{h_1=1,h_2,\ldots,h_m\}$ and  $\mathcal T=\{g_1=1,g_2,\dots,g_l\}$ be a tansversal of $H$ in $G$. Consider an $A$-algebra isomorphism $f: T\rightarrow T'$ such that
	\begin{align}\label{G-iso3}
	\begin{split}
	f(1_S)&=1_{S},\\
	f \circ \beta_g&=\beta'_g\circ f,\,\,\,\text{for all}\,\,\, g \in G.
	\end{split}
	\end{align}
	It is clear that $f$ induces (by restriction to $B^H$) an isomorphism between the $A$-algebras $T^H$ and $T'^H$. By  Proposition \ref{lem2} $(T^H,\beta_{G/H})$ is an enveloping action of $(T^He_H,\alpha_{G/H})$, where $e_H=\psi_H(1_S)=\sum_{j=1}^m\beta_{h_j}(1_S)e_j,$ and the family $\{e_i\}_{1\leq i\leq m}$ is given by \eqref{losei}. 
	
	In the same way  $(T'^H,\beta'_{G/H})$ is an enveloping action of $(T'^He'_H,\theta_{G/H})$, where $e'_H=\sum_{j=1}^m\beta'_{h_j}(1_{S})e'_j,$ and
	$$e'_1=1_{S}\,\,\,\,\text{and}\,\,\,\, e'_j=(1_{B'}-1_{S})\cdots(1_{B'}-\beta'_{h_{j-1}}(1_{S}))\beta'_{h_j}(1_{S}),\,\, 1\leq j\leq n.$$
	By (\ref{G-iso3}) we have that $f(e_j)=e'_j,$ for all $1\leq j\leq n,$
	and consequently $f(T^He_H)=T'^He'_H,$
	in particular, $f(e_H)=e'_H.$ 

	On the other hand, it follows from \eqref{dprima} that  
	$\tilde{D}_{gH}=T^He_H\beta_{g}(e_H)1_S\quad\text{and}\quad \tilde{D'}_{gH}=T'^He'_H\beta'_{g}(e'_H)1_{S'},$ and the partial isomorphisms are  
	$$\alpha_{gH}=(m_{1_S}\circ\gamma_{gH}\circ\psi_H)|_{\tilde{D}_{g\m H}}\quad\text{and}\quad \alpha'_{gH}=(m_{1_S}\circ\gamma'_{gH}\circ\psi'_H)|_{\tilde{D'}_{g\m H}},$$ respectively,   for all $g\in G.$ 
	Now the  map $\varphi=m_{1_{S}}\circ f\circ\psi_H:S\to S'$ 
	is a ring homomorphism, we shall check that it is $R$-linear. By  item (ii) of Proposition \ref{pro3.1} we have $A1_S=R.$ Then, for $r=a1_S\in R$ and $s\in S$ we have that
	\begin{align*}
	\varphi(rs)&=(m_{1_{S}} \circ f \circ \psi_H)(a1_Ss)=(m_{1_{S}} \circ f)(a \psi_H(1_S)\psi_H(s))\\
	&=(m_{1_{S}}\circ f)(a e_H \psi_H(s))=1_S(af(1_B)f(e_H)f(\psi_H(s)))\\
	&=1_{S}(a1_{B'}e'_Hf(\psi_H(s)))=1_S(ae'_Hf(\psi_H(s)))\\
	&=(a1_S)(1_{S}e'_H)f(\psi_H(a))=r(1_Sf(\psi_H(s))\\
	&=r(m_{1_{S}} \circ f \circ \psi_H)(s)\\
	&=r\varphi(s).
	\end{align*}
	
	Now, we check that $\varphi$ satisfies conditions (i) and (ii) of Definition \ref{pariso}. Let $g\in G.$ Then
	\begin{align*}
	\varphi(\tilde{D}_{gH})&
	\stackrel{\eqref{impsi}}=m_{1_{S}} \circ f(T^He_{gH})\\
	&=m_{1_{S}} (T'^Hf(e_{g_iH}))\\
	&=(m_{1_{S}} \circ f)(T^H\beta_{g}(e_H)e_H)\\
	&=1_{S}B'^Hf(\beta_{g}(e_H))f(e_H)\\
	&=1_{S}T'^H\beta'_{g}(f(e_H))e'_H\\
	&=T'^He'_H\beta'_{g}(e'_H)1_{S}\\
	&=\tilde{D'}_{gH}.
	\end{align*}
	Thus condition (i) is satisfied. Now we check (ii). Let $x\in \tilde{D}_{g\m H}.$ Then
	\begin{align*}
	(\varphi \circ \alpha_{gH})(x)&
	=(m_{1_{S}}\circ (f \circ \gamma_{gH}) \circ \psi_H)(x)\\
	&\stackrel{\eqref{G-iso3}}=(m_{1_{S}}\circ (\gamma'_{gH} \circ f) \circ \psi_H)(x)\\
	&=[(m_{1_{S}}\circ \gamma'_{gH} \circ id_{B'}\circ f \circ \psi_H)](x)\\
	&=[(m_{1_{S}}\circ \gamma'_{gH} \circ (\psi'_H \circ m_{1_{S}})\circ f \circ \psi_H)](x)\\
	&=[(m_{1_{S}}\circ \gamma'_{gH} \circ \psi'_H) \circ (m_{1_{S}}\circ f \circ \psi_H)](x)\\
	&=(\alpha'_{gH}\circ \varphi)(x).
	\end{align*}
	Finally,  by  Proposition \ref{pro9} $(S^{\alpha_H},\alpha_{G/H})$ and $(S'^{\theta_H},\alpha'_{G/H})$ are partially $G/H$-isomorphic.
	For the last affirmation, it is enough to take $H=\{1\}$.
\end{proof}

\section{The Construction of $\Hh_{par}(G,R)$}
From now on in this work, $G$  will denote a finite abelian group.

Let ${\mathcal H}_{par}(G,R)$ the set of equivalence classes of \emph{partial   abelian extensions }of $R$ with group $G$, that is partial Galois actions $(S,\alpha)$  of $R$ with  group $G.$ In this section we construct a product in  ${\mathcal H}_{par}(G,R)$ which turns it into a commutative  inverse semigroup.  First we recall the classical construction of the Harrison group (see \cite{H}).

\subsection{The Harrison group $\Hh(G,A)$}\label{harglo}
Let  $A$ be a commutative ring with unit. In \cite{H} the author introduced and studied the abelian group $\Hh(G,A)$ whose elements are equivalence  classes of (global) $G$ -equivalent Galois extensions  of $A.$  We denote the equivalence class of $(T, \beta)$  by ${\rm cl}(T,\bt)$  the  Multiplication in $\Hh(G,A)$ Is defined in the following way:\\
 Let ${\rm cl}(B,\bt), {\rm cl}(B', \bt')\in \Hh(G,A)$. It is well known that the tensor product $B\otimes_A B'$ is an abelian extension of $A$ with Galois group $G\times G$. By \cite[Theorem 2.2]{CHR} we have that $(T\otimes T')^{\delta G}\supseteq A$ is a Galois extension with Galois group $G=G\times G/\delta G,$ where $\delta G=\{(g,g\m)\mid g\in G\}.$
The group $G$ acts on $(T\otimes T')^{\delta G}$ via
\begin{equation*}
\Lambda_{(G\times G)/\delta G}: \left(g, \sum_i t_i\otimes t'_i\right)\mapsto \sum_i \bt_g(t_i)\otimes t'_i=\sum_i b_i\otimes \bt'_g(t'_i),
\end{equation*}
for all $t_i\in B$, $t'_i\in B'$ and $g\in G$. We set  
\begin{equation}\label{harprod}{\rm cl}(B,\bt) {\rm cl}(B', \bt')={\rm cl}\left((B\otimes B')^{\delta G}, \Lambda_{(G\times G)/\delta G}\right).\end{equation}
The identity element of  ${\rm cl}(E, \rho)$ containing the ring $E_G(A)$ constructed
in the following way. We choose symbols $\{e_g\}_{g\in G}$ and we consider the free $A$-module  $E_G(A)=\bigoplus_{g\in G} Ae_g,$ with basis  $\{e_g\}_{g\in G}.$ For the basis elements we define the product $e_ge_h=\delta_{g,h}e_g.$ Then $E_G(A)$ is an $A$-algebra. The action of $\rho$  of $G$ on $A$ is given by the $A$-linear extension of  $\rho_g\cdot e_{h}=e_{gh},$ for all $g,h\in G.$

Now we indicate the construction of the inverse element in $\Hh(G,A).$ %
Let ${\rm cl}(T,\bt) \in \Hh(G,A)$, the element ${\rm cl}(T,\bt)\m $ is represented by $T$ with action of $G$ defined by $\bt'_g(t)=\bt_{g^{-1}}(t)$, for all $g\in G$ and $t \in T$.

It is shown in \cite{H} that the study of the group $\Hh(G,A)$ reduces to the
case of cyclic Galois groups.
\subsection{The  group $\Hh_{gl}(G,A)$} 

	

From Definition \ref{globiso} and Remark \ref{obsglobiso} we can to consider the group $\Hh_{gl}(G,A)$ consists of all equivalence classes $[T,\beta]$ of the  relation $\overset{gl}\sim$ defined on the set of enveloping actions, with fixed part $A,$ of partial Galois extensions $S\supseteq R.$ We define a  product $*_{gl}$ in $\Hh_{gl}(G,A)$ by the formula \eqref{harprod}, that is $$[T,\beta]*_{gl}[T',\beta']
=[(T\otimes T')^{\delta G},\Lambda_{(G\times G)/\delta G}],$$
for all $[T,\beta], [T',\beta'] \in  \Hh_{gl}(G,A).$ Then we have the following.
\begin{prop} \label{hglo}The set $\Hh_{gl}(G,A)$ is a group, and the map $\om : \Hh_{gl}(G,A) \ni[T,\beta]_{gl} \mapsto \cl($T$, \beta)\in  \Hh(G,A)$ is a group homomorphism.
\end{prop}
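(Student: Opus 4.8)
The goal is to prove Proposition~\ref{hglo}: that $\Hh_{gl}(G,A)$ is a group under $*_{gl}$ and that $\om$ is a group homomorphism. The strategy is to piggyback on the known group structure of $\Hh(G,A)$ (from \cite{H}) rather than reproving associativity, existence of identity, and inverses from scratch. First I would observe that the underlying set of $\Hh_{gl}(G,A)$ consists of $\overset{gl}\sim$-classes of \emph{enveloping actions} $(T,\beta)$ of partial Galois extensions $S\supseteq R$, with fixed part $A=T^G$, and that each such $(T,\beta)$ is in particular a global $G$-Galois extension of $A$. Thus there is a natural set map $\om$ sending $[T,\beta]_{gl}$ to $\cl(T,\beta)\in\Hh(G,A)$. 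The first task is to check that $\om$ is well defined: if $(T,\beta)\overset{gl}\sim(T',\beta')$ then in particular they are $G$-isomorphic as global Galois extensions, so $\cl(T,\beta)=\cl(T',\beta')$. This is immediate from Definition~\ref{globiso}, since $\overset{gl}\sim$ is strictly finer than the relation defining $\Hh(G,A)$.

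\medskip
\noindent\textbf{Closure and the algebraic identities.} The product $*_{gl}$ is defined by the same formula~\eqref{harprod} as in $\Hh(G,A)$, namely via $(T\otimes T')^{\delta G}$ with the induced action $\Lambda_{(G\times G)/\delta G}$. The key closure point to verify is that $*_{gl}$ lands back in $\Hh_{gl}(G,A)$: that is, $(T\otimes T')^{\delta G}$ is again the enveloping action of some partial Galois extension of $R$ with fixed part $A$. I would argue this by exhibiting the relevant idempotent: the element $1_S\otimes 1_{S'}$ (suitably projected into the $\delta G$-invariants) should cut out a unital partial Galois subextension whose globalization is $(T\otimes T')^{\delta G}$, using the uniqueness of globalization recalled in Section~\ref{notions} together with the fact that $T$ and $T'$ are globalizations of $S,S'$. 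Granting closure, the associativity of $*_{gl}$, the fact that the Harrison identity element $\cl(E_G(A),\rho)$ lifts to a $\overset{gl}\sim$-class, and the compatibility of $\om$ with products all follow because $*_{gl}$ is computed by exactly the formula~\eqref{harprod} that defines multiplication in $\Hh(G,A)$; concretely, $\om([T,\beta]_{gl}*_{gl}[T',\beta']_{gl})=\cl(T,\beta)\cl(T',\beta')=\om([T,\beta]_{gl})\om([T',\beta']_{gl})$ by definition. Associativity then descends from associativity in $\Hh(G,A)$ once one checks that the associativity isomorphisms respect the extra condition $f(1_S)=1_{S'}$ bookkept in $\overset{gl}\sim$.

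\medskip
\noindent\textbf{Identity and inverses.} For the identity, I would take the class of the enveloping action attached to the trivial partial Galois extension (where $S=R$, so the partial action is already global and coincides with $E_G(A)$), and verify it acts as a two-sided unit for $*_{gl}$; the computation is the same as in $\Hh(G,A)$, with the supplementary check that the unit-preservation condition $f(1_S)=1_{S'}$ holds for the canonical isomorphisms $(E_G(A)\otimes T)^{\delta G}\cong T$. For inverses I would use the Harrison recipe recalled above, representing $[T,\beta]_{gl}^{-1}$ by $(T,\beta^\vee)$ with $\beta^\vee_g=\beta_{g^{-1}}$, and confirm this again defines an enveloping action of a partial Galois extension with the same $1_S$, hence a legitimate element of $\Hh_{gl}(G,A)$.

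\medskip
\noindent\textbf{Main obstacle.} The routine identities all transfer from \cite{H} because the product formula is literally the same; the genuine content—and the step I expect to be hardest—is \emph{closure}, i.e. verifying that $(T\otimes T')^{\delta G}$ is honestly the enveloping action of a partial Galois extension of $R$ and carries a well-defined distinguished idempotent playing the role of $1_S$, so that the $\overset{gl}\sim$ refinement (the extra condition $f(1_S)=1_{S'}$) is actually compatible with $*_{gl}$. Everything else reduces, via $\om$, to statements already established for the Harrison group.
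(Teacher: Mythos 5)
Your overall architecture coincides with the paper's: the identity is handled by exhibiting $(E_G(A),\rho)$ as the enveloping action of a partial Galois extension (the paper realizes it as the globalization of $(E_G(R),m_{1_S}\circ\rho)$ via the map $\Psi_G$ built from $\psi_G$ of Proposition \ref{pro3.1}(vii)), the inverse is given by the reversed action $\beta^\star_g=\beta_{g^{-1}}$ paired with the reversed partial action $\alpha^\star_g=\alpha_{g^{-1}}$ on $S^\star_g=S_{g^{-1}}$, and the homomorphism property of $\om$ is immediate because $*_{gl}$ is computed by the same formula \eqref{harprod}. You also correctly identify closure under $*_{gl}$ as the only substantive point. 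But that is exactly where your proposal has a genuine gap. The paper discharges closure by invoking its Section \ref{quo} machinery: $S\otimes S'$ is a partial Galois extension of $R$ with group $G\times G$ (via \cite[Proposition 2.9]{DPP}) whose globalization is $(T\otimes T',\beta\otimes\beta')$; then, taking the normal subgroup $H=\delta G$ of $G\times G$ (normal since $G$ is abelian), Proposition \ref{pro1.2.6.} yields that $(S\otimes S')^{(\alpha\otimes\alpha')_{\delta G}}$ carries a partial action of $(G\times G)/\delta G\cong G$ with invariant ring $R$ and with globalization precisely $\bigl((T\otimes T')^{\delta G},(\beta\otimes\beta')_{(G\times G)/\delta G}\bigr)$, while Theorem \ref{teofundpar2} makes it a partial Galois extension. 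Your substitute argument --- ``exhibit the idempotent $1_S\otimes 1_{S'}$ suitably projected into the $\delta G$-invariants, then use uniqueness of globalization'' --- does not work as stated, for two reasons.

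First, $1_S\otimes 1_{S'}$ is not $\delta G$-invariant in general (its orbit under $(g,g^{-1})$ moves it whenever $\alpha$ is genuinely partial), and the correct distinguished idempotent of $(T\otimes T')^{\delta G}$ is $e_{\delta G}=\psi_{\delta G}(1_S\otimes 1_{S'})$, produced by the inclusion--exclusion formula \eqref{fih}; the partial extension sits inside as $(T\otimes T')^{\delta G}(1_S\otimes 1_{S'})\cong (T\otimes T')^{\delta G}e_{\delta G}$ by Proposition \ref{pro3.1}(vii), and this identification is not a formality. Second, uniqueness of globalization is only available \emph{after} one knows that the quotient global action is a globalization of the restricted partial action, i.e.\ after verifying (G1)--(G4); condition (G4), that $(T\otimes T')^{\delta G}=\sum_{\bar g}(\beta\otimes\beta')_{\bar g}\bigl((T\otimes T')^{\delta G}e_{\delta G}\bigr)$, is the nontrivial computation of Proposition \ref{lem2} (the transversal argument with the idempotents $f_H$ and equation \eqref{loseik}), and it cannot be bypassed by an appeal to uniqueness. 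In short, the content you defer to a sketch is precisely the partial-action analogue of the second part of \cite[Theorem 2.3]{CHR} that the paper develops in Section \ref{quo}; without reconstructing it (or citing it), closure --- and hence the group structure --- remains unproven. A minor additional slip: your description of the identity (``$S=R$, so the partial action is already global'') is off, since $R$ with the trivial global $G$-action is not Galois; one needs either $S_g=\{0\}$ for $g\neq 1$ or the paper's presentation via $E_G(R)$.
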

\begin{proof}  To prove that $[E_G(A), \rho]\in \Hh_{gl}(G,A) $ we notice that $(E_G(A), \rho)$ is an enveloping action of $(E_G(R), m_{1_S}\circ \rho).$ Indeed,   by (vii) of Proposition \ref{pro3.1}  the map 
	$$\Psi_G: E_G(R) \ni  \sum\limits_{g\in G}xe_g\mapsto \sum\limits_{g\in G}\psi_G(x)e_g\in E_G(A),$$s a  $k$-algebra isomorphism. Now for $g,h\in G$ and   $xe_h\in E_G(R)$ we have
	$$\rho_g\circ \Psi_G(xe_h)=\rho_g(\psi_G(x)e_h)=\psi_G(x)e_{gh}=\Psi_G\circ (m_{1_S}\circ \rho_g)(xe_h).$$ 
	Moreover, it is clear that that $\sum\limits_{g\in G}\rho_g(\Psi_G(E_G(R)))=E_G(A),$
	and thus $[E_G(A), \rho]$ is the identity element of $\Hh_{gl}(G,A).$ 
	
	To check that $\Hh_{gl}(G,A)$ is closed under products, take $[T,\beta], [T',\beta'] \in \Hh_{gl}(G,A),$ where $(T,\beta)$ and $(T',\beta')$  are the enveloping actions of the partial Galois extensions $(S,\alpha)$ and $(S',\alpha')$ of $R.$ By Proposition \ref{pro1.2.6.}, $(S\otimes S')^{{\alpha\otimes \alpha'}_{\delta G}}$ is a $(\alpha\otimes\alpha')_{(G\times G)/\delta G}$-partial abelian extension of $R.$ Since $(g,h)=(g,1) \pmod{\delta G}$ for all $(g,h)\in G\times G,$ one has  by Proposition \ref{pro1.2.6.} that $((B\otimes B')^{\delta G},(\beta\otimes \beta')_{(G\times G)/\delta G})$ is an enveloping action of $((S\otimes S')^{\alpha \otimes \alpha'_{\delta G} },(\alpha\otimes\alpha')_{(G\times G)/\delta G}),$ where
	\begin{align*}
	\tilde{D}_{(g,1)\delta{G}}&=(T\otimes T')^{\delta G}(\beta_{g}\otimes \beta'_{1})(e_{\delta G})1_S\otimes 1_{S},\\
	(\alpha\otimes\alpha')_{G\times G/\delta G}&=(\tilde{D}_{(g, 1)\delta_G}, (\gamma \otimes \gamma')_{(g,1)\delta G})_{g\in G},\\
	\alpha \otimes \alpha'_{(g,1)}&=(m_{1_S\otimes 1_{S}}\circ \alpha \otimes \alpha'_{(g,1)\delta G}\circ \psi_{\delta G}) |_{\tilde{D}_{(g\m,1)\delta_{G}}},
	\end{align*} then $[(T\otimes T')^{\delta G},(\beta\otimes \beta')_{(G\times G)/\delta G})]\in \Hh_{gl}(G,A)$ and $\Hh_{gl}(G,A)$ is closed under products.
	
	To check that $ \Hh_{gl}(G,A)$ is closed under inverse elements, consider $\alpha^\star$ the partial action of $G$ on $S^\star=S$, with ideals $S^\star_g=S_{g^{-1}}$ and partial isomorphisms $\alpha^\star_g:S^\star_{g^{-1}}\to S^\star_g$ given by $\alpha^\star_g=\alpha_{g^{-1}}$ for all $g\in G$. We denote by $\beta^\star$ the global action of $G$ on $T^\star=T$ given by $\beta^\star_g=\beta_g^{-1}$ for all $g\in G$. Note that $(S^\star,\alpha^\star)$ is a partial abelian extension of $R$,
	and $(B^\star,\beta^\star)$ is an enveloping action for $(S^\star,\alpha^\star),$ with  $(T^\star)^G=A.$ Then  $[T^\star,\beta^\star]=[T,\beta]^{-1}$ in $\Hh_{gl}(G,A)$.
	Finally it is clear that $w$ is a group homomorphism. 
\end{proof}
%
\subsection{The inverse semigroup $\Hh_{par}(G,R)$}\label{harsemi}

Consider $[S,\alpha],[S',\alpha']\in \Hh_{par}(G,R)$,  then by \cite[Proposition 2.9]{DPP}  we have that  $S\otimes S'$ is a $\alpha\otimes \alpha'$- partial abelian extension of $R\otimes R=R$. 
We define on $\Hh_{par}(G,R)$ the operation $\ast_{par}$ by
\begin{align}\label{harsemprod}
[(S,\alpha)]\ast_{par}[(S,\alpha')]
&=[(S\otimes S')^{{\alpha \otimes \alpha'}_{\delta G}}, (\alpha\otimes\alpha')_{(G\times G)/\delta G}],
\end{align} where  $(\alpha\otimes\alpha')_{(G\times G)/\delta G}$ is given by \eqref{idempgi},
\eqref{dprima} and \eqref{alfaprima}.

Before proving that the product  \eqref{harsemprod} is well defined  we present a  description of  the idempotents $\tilde 1_{gH}=\beta_{g}(e_H)1_S, g\in G$  given in \eqref{idempgi} and the maps $\af_{gH}$ in \eqref{alfaprima}. Let $ H=\{h_1=1, h_2, \dots, h_m\}.$ Then
   \begin{align*}\tilde 1_{gH}&=\sum_{i=1}^m\beta_{gh_i}(1_S)\beta_{g}(e_i)1_S=\sum_{i=1}^m\beta_{gh_i}(1_S)1_S\beta_{g}(e_i)1_S=\sum_{i=1}^m1_{gh_i}\beta_{g}(e_i)1_S
\\&\stackrel{\eqref{btgei}}=1_g+\sum_{i=2}^m1_{gh_i}\prod\limits_{j=2}^i(1_S-1_{gh_{j-1}})1_{gh_i}=
1_g+\sum_{i=2}^m\prod\limits_{j=2}^i(1_S-1_{gh_{j-1}})1_{gh_i}.
\end{align*} Then
\begin{equation}\label{1gh}
\tilde 1_{gH}=1_g+\sum_{i=2}^m\prod\limits_{j=2}^i(1_S-1_{gh_{j-1}})1_{gh_i}.
\end{equation}
Now by \eqref{alfaprima} we have for $g\in G$ and $x\in \tilde D_{g^{-1}H}$ that
\begin{align*} \alpha_{gH}(x)&=(m_{1_S}\circ \gamma_{gH}\circ \psi_H)(x)
=\sum_{i=1}^m\beta_{gh_i}(x)\beta_{g}(e_i)1_S
\\&=\sum_{i=1}^m\af_{gh_i}(x1_{(gh_i)\m})\beta_{g}(e_i)1_S
\\&=\af_{g}(x1_{g\m})+ \sum_{i=2}^m\af_{gh_i}(x1_{(gh_i)\m})\prod\limits_{j=2}^i(1_S-1_{gh_{j-1}}).
\end{align*}
That is 
\begin{equation}\label{alphapprima}  \alpha_{gH}(x)=\af_{g}(x1_{g\m})+ \sum_{i=2}^m\prod\limits_{j=1}^{i-1}(1_S-1_{gh_{j}})\af_{gh_i}(x1_{(gh_i)\m}).
\end{equation}

\begin{lem} \label{welldef}Let $(S,\alpha)$ and $(S',\alpha')$ partial Galois extension of $R$ with the same group $G.$ If $H$ is a subgrou of $G$ then $(S^H,\alpha_{G/H})$ and $(S'^H,\alpha'_{G/H})$ are $G/H$-equivalent.
\end{lem}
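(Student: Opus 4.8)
The plan is to produce the required $G/H$-equivalence by restricting, without passing to the enveloping actions, the $R$-algebra isomorphism coming from the assumed $G$-equivalence of $(S,\alpha)$ and $(S',\alpha')$, and then checking compatibility against the explicit formulas \eqref{1gh} and \eqref{alphapprima}. Since $G$ is abelian, $H$ is automatically normal in $G$, so Proposition \ref{pro1.2.6.} applies and both $\alpha_{G/H}$ on $S^{\alpha_H}$ and $\alpha'_{G/H}$ on $S'^{\alpha'_H}$ are genuine partial actions of $G/H$, with domains $\tilde D_{gH}=S^{\alpha_H}\tilde 1_{gH}$ and $\tilde D'_{gH}=S'^{\alpha'_H}\tilde 1'_{gH}$ and with $\alpha_{gH}$, $\alpha'_{gH}$ given by \eqref{alphapprima}. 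Let $f\colon S\to S'$ be the $R$-algebra isomorphism witnessing the $G$-equivalence, so that $f(S_g)\subseteq S'_g$ and $f\circ\alpha_g=\alpha'_g\circ f$ on $S_{g^{-1}}$ for every $g\in G$ (cf. Definition \ref{pariso} and Proposition \ref{pro9}).

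First I would record that $f$ matches the local units: since $\overset{par}{\sim}$ is an equivalence relation, $f^{-1}$ satisfies the same two conditions, whence $f(S_g)=S'_g$, and as $f$ carries the identity $1_g$ of the unital ideal $S_g$ to the identity of $S'_g$ we obtain $f(1_g)=1'_g$ for all $g$. Applying $f$ to \eqref{1gh} then yields $f(\tilde 1_{gH})=\tilde 1'_{gH}$ for every $g$. Next I would check that $f$ restricts to an isomorphism $\varphi:=f|_{S^{\alpha_H}}\colon S^{\alpha_H}\to S'^{\alpha'_H}$: if $\xi\in S^{\alpha_H}$, then applying $f$ to $\alpha_h(\xi 1_{h^{-1}})=\xi 1_h$ and using condition (ii) of Definition \ref{pariso} together with $f(1_h)=1'_h$ gives $\alpha'_h(\varphi(\xi)1'_{h^{-1}})=\varphi(\xi)1'_h$, so $\varphi(\xi)\in S'^{\alpha'_H}$; the reverse inclusion follows from the same argument applied to $f^{-1}$.

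It then remains to verify the two conditions of Definition \ref{pariso} for $\varphi$ relative to the group $G/H$. Condition (i) is immediate from $\varphi(\tilde D_{gH})=\varphi(S^{\alpha_H})f(\tilde 1_{gH})=S'^{\alpha'_H}\tilde 1'_{gH}=\tilde D'_{gH}$. For condition (ii), I would apply $\varphi=f$ termwise to the expression \eqref{alphapprima} for $\alpha_{gH}(x)$, $x\in\tilde D_{g^{-1}H}$: each product $\prod_j(1_S-1_{gh_j})$ is sent to $\prod_j(1_{S'}-1'_{gh_j})$ by the previous step, and each summand $\alpha_{gh_i}(x1_{(gh_i)^{-1}})$ is sent to $\alpha'_{gh_i}(\varphi(x)1'_{(gh_i)^{-1}})$ by condition (ii) for $\alpha$ (noting $x1_{(gh_i)^{-1}}\in S_{(gh_i)^{-1}}$). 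Comparing with \eqref{alphapprima} for the primed action gives $\varphi\circ\alpha_{gH}=\alpha'_{gH}\circ\varphi$ on $\tilde D_{g^{-1}H}$, which is precisely condition (ii) for $G/H$. Hence $(S^{\alpha_H},\alpha_{G/H})\overset{par}{\sim}(S'^{\alpha'_H},\alpha'_{G/H})$.

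The main obstacle is really the bookkeeping underlying $f(1_g)=1'_g$ and the stability $f(S^{\alpha_H})=S'^{\alpha'_H}$; once these are in place, the explicit formulas \eqref{1gh} and \eqref{alphapprima}, derived just before the statement precisely for this purpose, reduce the verification of Definition \ref{pariso} to a direct termwise comparison. An alternative route would be to lift $f$ to a global $G$-isomorphism of the enveloping actions $(T,\beta)\overset{gl}{\sim}(T',\beta')$ and invoke Theorem \ref{pro10}, but that forces one to reconcile the global fixed rings $T^G$ and $T'^G$ with the common base $R$, a complication the direct argument avoids.
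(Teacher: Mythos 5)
Your proposal is correct and follows essentially the same route as the paper's proof: restrict the partial $G$-isomorphism $f$ to the invariant subring, deduce $f(\tilde 1_{gH})=\tilde 1'_{gH}$ from \eqref{1gh} (hence $f(\tilde D_{gH})=\tilde D'_{gH}$ via \eqref{dprima}), and verify compatibility of the partial isomorphisms termwise through \eqref{alphapprima}. The only difference is that you spell out details the paper leaves implicit, notably $f(1_g)=1'_g$ (via symmetry of $\overset{par}{\sim}$) and the invariance $f(S^{\alpha_H})=S'^{\alpha'_H}$, which is a welcome tightening rather than a departure.
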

\begin{proof} Let $f\colon S\to S'$ be the $G$-isomorphism. Then it is clear that $f$ restricts to a $R$-algebra isomorphism $f_{\mid S^H}\colon S^H\to S'^H$ and  the equality $f(\tilde{1}_{gH})=\tilde{1'}_{gH}$ follows by \eqref{1gh}, then $f(\tilde{D}_{gH})=\tilde{D}_{gH}$  thanks to \eqref{dprima}. Finally using the fact that $f$ is a $G$-isomorphism and \eqref{alphapprima} we get that $f_{\mid S^H}$ is a $G/H$-isomorphism which implies the result.
\end{proof}
\begin{prop}\label{boadef} The set $\Hh_{par}(G,R)$ with product
	$\ast_{par}$ is a commutative semigroup.
\end{prop}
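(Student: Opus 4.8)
The plan is to check, in turn, that $\ast_{par}$ is a well-defined binary operation on $\Hh_{par}(G,R)$, that it is commutative, and that it is associative, the last being the only substantial point. Throughout I use that $G$ is abelian, so $\delta G=\{(g,g\m)\mid g\in G\}$ is a normal subgroup of $G\times G$ and $(G\times G)/\delta G\cong G$ via $(g,h)\mapsto gh$.

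\textbf{Closure and well-definedness.} Closure is immediate: by \cite[Proposition 2.9]{DPP} the tensor $S\ot S'$ is a partial Galois abelian extension of $R$ with group $G\times G$, so Theorem \ref{teofundpar2}, applied to the normal subgroup $\delta G$ of $G\times G$, shows that $(S\ot S')^{\af\ot\af'_{\delta G}}$ is a partial Galois extension of $(S\ot S')^{\af\ot\af'}=R$ with group $(G\times G)/\delta G\cong G$, hence again represents an element of $\Hh_{par}(G,R)$. For representative-independence, suppose $(S,\af)\overset{par}{\sim}(S_1,\af_1)$ and $(S',\af')\overset{par}{\sim}(S_1',\af_1')$ via $R$-algebra isomorphisms $f,f'$. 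Then $f\ot f'\colon S\ot S'\to S_1\ot S_1'$ is an $R$-algebra homomorphism, and since the ideals and partial isomorphisms of $\af\ot\af'$ are the $S_g\ot S_h'$ and the $\af_g\ot\af_h'$, conditions (i) and (ii) of Definition \ref{pariso} for $f\ot f'$ follow at once from those for $f$ and $f'$; by Proposition \ref{pro9} it is a partial $G\times G$-isomorphism. Applying Lemma \ref{welldef} with ambient group $G\times G$ and subgroup $\delta G$ then yields a partial $(G\times G)/\delta G$-isomorphism between the two $\delta G$-invariant subalgebras, i.e. the two values of $\ast_{par}$ agree in $\Hh_{par}(G,R)$.

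\textbf{Commutativity.} The flip $\tau\colon S\ot S'\to S'\ot S$, $s\ot s'\mapsto s'\ot s$, is an $R$-algebra isomorphism intertwining $\af\ot\af'$ with $\af'\ot\af$ along the swap automorphism $\sigma\colon(g,h)\mapsto(h,g)$ of $G\times G$. Since $G$ is abelian, $\sigma$ fixes $\delta G$ setwise and induces the identity on $(G\times G)/\delta G\cong G$; inserting this into the explicit formula \eqref{alphapprima} for the quotient maps, $\tau$ restricts to a partial $G$-isomorphism between $(S\ot S')^{\af\ot\af'_{\delta G}}$ and $(S'\ot S)^{\af'\ot\af_{\delta G}}$, so $\ast_{par}$ is commutative.

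\textbf{Associativity.} This is the heart of the matter, and I would deduce it by transporting the problem to the group $\Hh_{gl}(G,R)$ of Proposition \ref{hglo}. The point is that the fixed ring $T^G$ of any enveloping action is canonically identified with $R=S^\af$ via $\psi_G$ (Proposition \ref{pro3.1}(ii),(vii) give $e_G=1_T$ and an isomorphism $S^\af\cong T^G$), so every enveloping action may be regarded as an object with fixed part $R$. I would then consider the assignment $\mathcal E\colon\Hh_{par}(G,R)\to\Hh_{gl}(G,R)$ sending $[S,\af]$ to the $\overset{gl}{\sim}$-class of its enveloping action $(T,\bt)$, and establish: (a) $\mathcal E$ is well defined, because a partial $G$-isomorphism lifts, by uniqueness and functoriality of the globalization \cite{DE}, to a global $G$-isomorphism of enveloping actions fixing $1_S$; (b) $\mathcal E$ is injective, which is precisely Theorem \ref{pro10} with $H=\{1\}$; and (c) $\mathcal E$ is multiplicative, $\mathcal E([S,\af]\ast_{par}[S',\af'])=\mathcal E([S,\af])*_{gl}\mathcal E([S',\af'])$, because the enveloping action of the partial product is exactly the global Harrison product $((T\ot T')^{\delta G},\Lambda_{(G\times G)/\delta G})$ recorded in the proof of Proposition \ref{hglo}. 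Granting these, associativity of $\ast_{par}$ is inherited from that of the (abelian) group $\Hh_{gl}(G,R)$: for classes $a,b,c$ one has $\mathcal E((a\ast_{par}b)\ast_{par}c)=(\mathcal E a *_{gl}\mathcal E b)*_{gl}\mathcal E c=\mathcal E a *_{gl}(\mathcal E b *_{gl}\mathcal E c)=\mathcal E(a\ast_{par}(b\ast_{par}c))$, and injectivity of $\mathcal E$ concludes; this also re-proves commutativity.

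\textbf{Main obstacle.} I expect the real work to lie entirely in step (c), together with the careful identification in (a): namely, proving that the enveloping-action construction is functorial for partial $G$-morphisms and carries $\ast_{par}$ to $\ast_{gl}$ while respecting the identity-preservation condition built into $\overset{gl}{\sim}$. A self-contained alternative avoiding $\Hh_{gl}(G,R)$ would identify both iterated products directly with $(S\ot S'\ot S'')^{(\af\ot\af'\ot\af'')_N}$ for $N=\{(g_1,g_2,g_3)\in G^3\mid g_1g_2g_3=1\}$ with the residual $G^3/N\cong G$ action; but this requires a transitivity (tower) property of the partial quotient construction of Theorem \ref{teofundpar2} along a chain of normal subgroups, which is the genuinely delicate computation one trades the $\mathcal E$-argument for.
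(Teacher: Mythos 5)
Your proposal is correct, and on the two points the paper actually argues it coincides with the paper's proof: well-definedness via Lemma \ref{welldef} (with the preliminary step, which you make explicit and the paper leaves implicit, that $f\ot f'$ is a partial $G\times G$-isomorphism by Proposition \ref{pro9}, applied with $H=\delta G$ normal in $G\times G$), and closure via \cite[Proposition 2.9]{DPP} together with Theorem \ref{teofundpar2}. Where you diverge is that the paper simply declares commutativity and associativity ``clear,'' whereas you supply genuine arguments: the flip map intertwined with the swap of $G\times G$ (correctly noting that the swap preserves $\delta G$ and induces the identity on $(G\times G)/\delta G\cong G$ because $G$ is abelian), and, for associativity, transport through $\Hh_{gl}(G,R)$ via the enveloping-action assignment $\mathcal E$. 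This transfer is in fact the same technique the paper itself deploys later, in the proof of Theorem \ref{invpar}, to establish regularity, so your route is very much in the spirit of the paper; moreover your ingredient (c) is already contained in the paper's proof of Proposition \ref{hglo}, where it is shown that $((T\ot T')^{\delta G},(\bt\ot\bt')_{(G\times G)/\delta G})$ is the enveloping action of the partial product, and your ingredient (b) is exactly Theorem \ref{pro10} with $H=\{1\}$. Two caveats on what your reduction buys: first, ingredient (a) (that a partial $G$-isomorphism lifts to a global $G$-isomorphism of globalizations sending $1_S$ to $1_{S'}$) is not stated anywhere in the paper and does need the uniqueness-of-globalization argument from \cite{DE} that you flag; second, the paper never explicitly verifies associativity of $*_{gl}$ either (Proposition \ref{hglo} asserts the group structure, leaning on Harrison's classical theory), so your argument inherits associativity from that assertion rather than proving it from scratch --- a direct verification would require checking that the canonical associator of triple tensor products respects the distinguished idempotents demanded by $\overset{gl}{\sim}$, which is routine but worth a line. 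Your closing observation is also apt: the self-contained alternative via $(S\ot S'\ot S'')^{(\af\ot\af'\ot\af'')_N}$ would require a transitivity property of the quotient construction of Theorem \ref{teofundpar2} that the paper nowhere establishes, which is presumably why the paper (and you) route the argument through the global level instead.
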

\begin{proof} First of all by Lemma \ref{welldef} the product   $\ast_{par}$ is well defined and the fact that $\Hh_{par}(G,R)$ is closed under product follows  from Theorem \ref{teofundpar2}. Finally it is clear that with this product $\Hh_{par}(G,R)$  is commutative and associative.
\end{proof}

We recall that an inverse semigroup $\I$ is a semigroup in which the following conditions hold.
\begin{itemize}
	\item $\I$ is regular, that is,  given $x\in \I$ there is an  element $x^\star\in \I$ such that $xx^\star x=x\quad\text{and}\quad x^\star xx^\star=x^\star.$
	\item The idempotents of $\I$ commute.
\end{itemize}
It is well known that the two conditions above are equivalent to the fact that, for any $x\in \I$ there exists a unique $x^\star\in \I$ such that $xx^\star x=x\quad\text{and}\quad x^\star xx^\star=x^\star.$ The element $x^*$ is called the {\it  inverse} of $x.$ One can ckeck that 
$$E(\I)=\{xx^*\mid x\in \I\}$$   is the set of idempotents of $\I,$ and  $E(\I)$ is a meet semilattice with respect to the partial ordering $e\leq f,$ if $ef=e.$  (For more details on inverse semigroups the interested reader may consult \cite{L}).

We have the following.

\begin{thm}\label{invpar}
	$\Hh_{par}(G,R)$ is an abelian inverse semigroup, which contains $\Hh(G,R).$
\end{thm}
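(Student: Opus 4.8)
The plan is to check the two defining conditions of an inverse semigroup recalled just above the statement — that the idempotents commute and that the semigroup is regular — and then to locate $\Hh(G,R)$ inside $\Hh_{par}(G,R)$. Since $\Hh_{par}(G,R)$ is commutative by Proposition \ref{boadef}, its idempotents commute automatically, so the entire burden reduces to regularity: every class must admit a weak inverse.

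For regularity I would fix $[S,\alpha]\in\Hh_{par}(G,R)$, choose a representative $(S,\alpha)$ with enveloping action $(T,\beta)$, and set $A=T^G$. As candidate weak inverse I take the class $[S^\star,\alpha^\star]$ of the reversed partial action $\alpha^\star_g=\alpha_{g^{-1}}$ on $S^\star=S$ (with $S^\star_g=S_{g^{-1}}$) introduced in the proof of Proposition \ref{hglo}; there it is already verified that $(S^\star,\alpha^\star)$ is a partial abelian extension of $R$ whose enveloping action is $(T^\star,\beta^\star)$, that $(T^\star)^G=A$, and that $[T^\star,\beta^\star]=[T,\beta]^{-1}$ in the group $\Hh_{gl}(G,A)$. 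The one identity to establish is
$$[S,\alpha]\ast_{par}[S^\star,\alpha^\star]\ast_{par}[S,\alpha]=[S,\alpha];$$
once it holds, the standard semigroup fact that $xyx=x$ forces $yxy$ to be an inverse of $x$, combined with commuting idempotents, promotes $[S^\star,\alpha^\star]$ to a genuine inverse and shows that $\Hh_{par}(G,R)$ is inverse.

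To prove this identity I would transport the computation into the group $\Hh_{gl}(G,A)$ through enveloping actions. The constructions in the proofs of Propositions \ref{hglo} and \ref{boadef} show that taking enveloping actions intertwines the two products: the enveloping action of $(S\otimes S')^{(\alpha\otimes\alpha')_{\delta G}}$ is $((T\otimes T')^{\delta G},(\beta\otimes\beta')_{(G\times G)/\delta G})$, which by \eqref{harprod} is exactly the $\ast_{gl}$-product of the two envelopes. Applying this twice, using associativity of $\ast_{par}$, the enveloping action of $[S,\alpha]\ast_{par}[S^\star,\alpha^\star]\ast_{par}[S,\alpha]$ is globally $G$-isomorphic to $[T,\beta]\ast_{gl}[T^\star,\beta^\star]\ast_{gl}[T,\beta]$; all three factors have fixed ring $A$, so this triple product is computed inside $\Hh_{gl}(G,A)$, where $[T^\star,\beta^\star]=[T,\beta]^{-1}$ gives the value $[T,\beta]$. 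Thus the enveloping actions of $[S,\alpha]\ast_{par}[S^\star,\alpha^\star]\ast_{par}[S,\alpha]$ and of $[S,\alpha]$ are globally $G$-isomorphic, both having fixed ring $A$, and Theorem \ref{pro10} with $H=\{1\}$ then yields that the two partial extensions are partially $G$-isomorphic, which is precisely the desired identity. For the containment, global $G$-Galois extensions of $R$ are exactly the partial Galois extensions with $1_g=1_S$ for all $g$; on these the idempotents $\tilde 1_{gH}$ of \eqref{1gh} and the maps $\alpha_{gH}$ of \eqref{alphapprima} reduce to their classical counterparts, so $\ast_{par}$ restricts to the Harrison product \eqref{harprod} and $\overset{par}{\sim}$ restricts to classical $G$-isomorphism. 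Hence $\Hh(G,R)$ embeds as a subsemigroup of $\Hh_{par}(G,R)$, sitting as a subgroup whose identity is the class $[E_G(R),\rho]$.

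The main obstacle I anticipate is the bookkeeping needed to make the enveloping-action-to-product intertwining precise enough to nest it for the triple product while keeping the base ring $A$ the same for all factors. Keeping $A$ fixed is exactly what makes the hypothesis $T^G=A=T'^G$ of Theorem \ref{pro10} available, so this point is the crux on which the regularity identity — and therefore the whole theorem — rests.
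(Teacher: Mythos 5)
Your proposal is correct and takes essentially the same route as the paper: both pass from the candidate weak inverse $(S^\star,\alpha^\star)$ to the enveloping actions, use $[T^\star,\beta^\star]=[T,\beta]^{-1}$ in the group $\Hh_{gl}(G,A)$ of Proposition \ref{hglo} to get the global triple-product identity, and descend to the partial level by Theorem \ref{pro10} with $H=\{1\}$. The only cosmetic differences are that the paper exhibits the global $G$-isomorphism explicitly via the map $\theta(x\otimes y\otimes z)=xyz$ (verified bijective through Proposition \ref{pro9}) where you cite the group structure abstractly, and proves the second identity $x^\star x x^\star=x^\star$ ``analogously'' where you invoke the standard fact that $xyx=x$ makes $yxy$ an inverse of $x$.
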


\begin{proof} By Proposition \ref{boadef} is a $\Hh_{par}(G,R)$ is a commutative semigroup. Thus we only need to show that $\Hh_{par}(G,R)$ is regular.
	Let $(S,\alpha)$ be a partial abelian extension of $R$ with enveloping action $(B,\beta)$. We denote by $\alpha^\star$ the partial action of $G$ on $S^\star=S$ defined  in the proof of Proposition \ref{hglo} and  $(B^\star,\beta^\star)$ an enveloping action  $(S^\star,\alpha^\star).$ We shall check that  $[S^\star,\alpha^\star]=[S,\alpha]^\star.$  Note that $[B^\star,\beta^\star]=[B,\beta]^{-1}$ in $\Hh_{gl}(G,A),$ thus $[B,\beta]\ast[B^\star,\beta^\star]\ast[B,\beta]=[B,\beta]$ in $\Hh_{gl}(G,A).$ Notice that $[B,\beta]\ast[B^\star,\beta^\star]\ast[B,\beta]=[(B\otimes B^\star\otimes B)^{\delta G}, \beta_g\otimes\beta^\star\otimes\beta],$ where  $\Delta G=\{((g^{-1},1)\delta G, g)\ |\ g\in G\}.$ Moreover the map . 
	\begin{align*}
	\theta:(B\otimes B^\star\otimes B)^{\delta G}&\rightarrow B\\
	x\otimes y\otimes z&\mapsto xyz,
	\end{align*}
	is a  $A$-algebra homomorphism such that $\theta\circ(\beta_g\otimes\beta^\star_{g}\otimes\beta_{g})=\beta_g\circ\theta$ for all $g \in G$. Note that  $\theta(1_R\otimes 1_R\otimes 1_R)=1_R,$ it follows by Proposition \ref{pro9} that $\theta$ is an isomorphism and thus   $((B\otimes B'\otimes B)^{\Delta G}, (\beta\otimes\beta'\otimes\beta)_{(G\times G\times G)/\Delta G})\quad\text{and}\quad (B,\beta),$  are globally $G=(G\times G\times G)/\Delta G$-isomorphic. Since these extensions are  enveloping actions of $$((S\otimes S^\star\otimes S)^{(\alpha\otimes\alpha^\star\otimes\alpha)_{\Delta G}},(\alpha\otimes\alpha^\star\otimes\alpha)'_{(G\times G\times G)/\Delta G})\quad\text{and}\quad (S,\alpha),$$ 
respectively, then by  Theorem \ref{pro10} these partial actions are $(G\times G\times G)/\Delta G$-isomorphic, that is, $[S,\alpha]\ast[S,\alpha]^\star\ast[S,\alpha]=[S,\alpha]$  In an analogous way one shows that
	$[S,\alpha]^\star\ast[S,\alpha]\ast[S,\alpha]^\star=[S,\alpha]^\star, $ and we conclude that $\Hh_{par}(G,R)$ is an inverse semigroup. Finally it is clear that  $\Hh(G,R)$  is a subgroup of $\Hh_{par}(G,R).$
\end{proof}

Because  Theorem \ref{invpar} and Clifford's theorem $\Hh_{par}(G,R)$ is a strong semilattice of abelian groups $\Hh_{par}(G,R)=\bigcup\limits_{\xi\in \Lambda}\Hh_{par, \xi}(G,R),$ where $\Lambda$ is a semillatice isomorphic to the idempotents of $\Hh_{par}(G,R)$ and $\Hh_{par, \xi}(G,R)$ is a group for all $\xi\in \Lambda,$ (see \cite [Cor. IV.2.2]{HO}). Therefore it is useful to study the idempotents of $\Hh_{par}(G,R).$ 

We give 
the following.
\begin{prop}\label{idemp} Let $\alpha=\{\alpha_g \colon S_{g\m}\to S_g\}_{g\in G}$ be a unital  partial  action of $G$ on $S$ such that $S/R$ is a partial Galois extension. Then 
	\begin{enumerate}
		\item The family  $\hat\alpha=\left\{\hat\alpha_{(l,t)} \colon \left(\prod\limits_{g\in G}S_g\right)_{\!\!\!(l,t)\m}\to\left (\prod\limits_{g\in G}S_g\right)_{(l,t)}\right\}_{(l,t)\in G\times G},$ where 
		$$\left(\prod\limits_{g\in G}S_g\right)_{(l,t)}=\prod\limits_{g\in G}S_gS_lS_{t\m g }=\prod\limits_{g\in G}S_g1_l1_{t\m g },$$ 
		and 
		\begin{equation}\label{achat}\hat\alpha_{(l,t)}[(d_g1_{l\m}1_{t g})_{g\in G}]=(\af_l(d_g1_{l\m})1_{ltg})_{g\in G},\end{equation}  with $d_g\in S_g$  for all $g\in G,$
		is a unital partial action of $G\times G$ on $\prod\limits_{g\in G}S_g.$
		\item The  partial action $\af\ot \af^*=\{(\af\ot \af^*)_{(l,t)} \colon S_{l\m}\ot S_t\to  S_{l}\ot S_{t\m}\}_{(l,t)\in G\times G},$ where $(\af\ot \af^*)_{(l,t)} =\af_l\ot \af_{t\m} ,$ for all $(l,t)\in G\times G$  is partially $(G\times G)$-isomorphic to $\hat\alpha.$
		\item Let $E(S,\alpha)=\left(\prod\limits_{g\in G}S_g\right)^{\!\!\!\delta G}.$ Then  \begin{equation}\label{idempe}E(S,\alpha)=\left\{(d_g)_{g\in G}\in \prod\limits_{g\in G}S_g: \af_l(d_g1_{l\m})1_g=d_g1_l1_{lg}, \,\,\, \forall l\in G\right\},\end{equation} and  $[(E(\alpha),\hat\alpha_{G\times G/\delta G})]$ is an idempotent in $\Hh_{par}(G,R),$

where
\begin{equation}\label{acfix}\hat\alpha_{(1,l)\delta G}((d_g)_{g\in G})=(d_g1_{lg})_{g\in G}+ \sum_{i=2}^m\prod\limits_{j=1}^{i-1}(1_g-1_{lh_{j}}1_{h_jg})_{g\in G}(d_g1_{h_i} 1_{h_ig}1_{l g})_{g\in G}.\end{equation}

	\end{enumerate}
\end{prop}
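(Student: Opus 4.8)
\emph{Strategy.} I would derive (1) and (2) together by transporting the tensor–product partial action along the partial Galois isomorphism, and then obtain (3) from the quotient construction together with the inverse semigroup structure of Theorem \ref{invpar}. First note that $\af\ot\af^\star$ is a unital partial action of $G\times G$ on $S\ot S$: its ideal indexed by $(l,t)$ is $S_l\ot S_{t\m}=(S\ot S)(1_l\ot 1_{t\m})$ and its isomorphisms are $\af_l\ot\af_{t\m}$, so it is just the tensor product of the unital partial actions $\af$ and $\af^\star$, the same construction used (via \cite{DPP}) to define $\ast_{par}$. Next introduce the partial Galois map
\[
\phi\colon S\ot_R S\to \prod_{g\in G}S_g,\qquad \phi(s\ot t)=(s\,\af_g(t1_{g\m}))_{g\in G},
\]
whose $g$-component lies in $S_g$ since $S_g$ is an ideal. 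That $\phi$ is an $R$-algebra isomorphism is precisely the partial Galois hypothesis on $S\supseteq R$ (cf. \cite{DFP}); alternatively its bijectivity can be proved by the method of Proposition \ref{pro9}, writing the inverse through the partial Galois coordinates and the trace.

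The core of (1)--(2) is then to check that $\phi$ intertwines $\af\ot\af^\star$ with $\hat\af$, i.e. that $\phi((S\ot S)_{(l,t)})=(\prod_g S_g)_{(l,t)}$ and $\phi\circ(\af_l\ot\af_{t\m})=\hat\af_{(l,t)}\circ\phi$ on the relevant ideal. Expanding $\af_g(\af_{t\m}(u)1_{g\m})$ with \eqref{form1:partial-action} and axioms (P3)--(P4), this computation reduces exactly to formula \eqref{achat}. Once the identity holds, $\hat\af=\phi\circ(\af\ot\af^\star)\circ\phi\m$ is a unital partial action by transport of structure, which is (1), and $\phi$ is by construction the partial $(G\times G)$-isomorphism required by (2).

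For (3) I would first compute $E(S,\af)=(\prod_g S_g)^{\delta G}$ directly from the definition of invariants applied to $\hat\af$ restricted to $\delta G$. Since $\delta G=\{(l,l\m)\}$, in \eqref{achat} one has $ltg=g$, so no permutation of the factors occurs: evaluating $\hat\af_{(l,l\m)}$ on $(d_g)_g$ times the idempotent of its domain gives $(\af_l(d_g1_{l\m})1_g)_g$, while multiplication by the idempotent of its range gives $(d_g1_l1_{lg})_g$, and equating these for all $l$ yields \eqref{idempe}. The formula \eqref{acfix} for $\hat\af_{(1,l)\delta G}$ is obtained by specialising the general quotient formula \eqref{alphapprima} to $G\times G$, the normal subgroup $\delta G$, and the coset representatives $(1,l)(h_i,h_i\m)=(h_i,lh_i\m)$, and then simplifying each summand $\af_{h_i}(d_g1_{h_i\m})1_{lg}$ by the defining relation \eqref{idempe} of $E(S,\af)$, which turns it into $d_g1_{h_i}1_{h_ig}1_{lg}$.

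Finally, the idempotency of $[(E(\af),\hat\af_{(G\times G)/\delta G})]$ is the conceptual step. By (2), $(S\ot S^\star,\af\ot\af^\star)$ is partially $(G\times G)$-isomorphic to $(\prod_g S_g,\hat\af)$; applying Lemma \ref{welldef} with the normal subgroup $\delta G$ of $G\times G$, this isomorphism descends to the $\delta G$-invariant subalgebras with their quotient actions, so that under $(G\times G)/\delta G\cong G$
\[
[(E(\af),\hat\af_{(G\times G)/\delta G})]=[(S\ot S^\star)^{(\af\ot\af^\star)_{\delta G}},(\af\ot\af^\star)_{(G\times G)/\delta G}]=[S,\af]\ast_{par}[S,\af]^\star,
\]
the last equality using $[S^\star,\af^\star]=[S,\af]^\star$ from Theorem \ref{invpar}. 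As $xx^\star$ is idempotent in any inverse semigroup and $\Hh_{par}(G,R)$ is one by Theorem \ref{invpar}, the class is idempotent. The main obstacle I anticipate is the index and idempotent bookkeeping: matching the transported action with \eqref{achat} and deriving \eqref{acfix} from \eqref{alphapprima}, where one must track how $\hat\af$ permutes the factors $S_g$ (trivially on $\delta G$, but by a shift on the quotient) --- this is what produces the idempotent products appearing in \eqref{1gh} and \eqref{acfix}. Everything else is either standard or a formal consequence of the machinery already developed.
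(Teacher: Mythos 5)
Your proposal is correct and follows essentially the same route as the paper: the same Galois isomorphism $\varphi(x\ot y)=(x\af_g(y1_{g\m}))_{g\in G}$ from \cite[Theorem 4.1, iv)]{DFP}, the same direct computation of the $\delta G$-invariants to get \eqref{idempe}, the same appeal to Lemma \ref{welldef} together with the inverse-semigroup identity for $[S,\af]\ast_{par}[S,\af]^\star$ from Theorem \ref{invpar} (which the paper leaves implicit but clearly intends), and the same specialization of \eqref{alphapprima} to the cosets $(1,l)(h_i,h_i\m)=(h_i,lh_i\m)$ simplified via \eqref{idempe} to obtain \eqref{acfix}. The only cosmetic deviation is that you deduce part (1) from part (2) by transport of structure along $\varphi$, whereas the paper checks (P1)--(P4) for $\hat\af$ directly; the underlying computations coincide.
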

\proof 1) First of all notice that for all $(l,t)\in G\times G$ we have 
\begin{align*}
\alpha'_{(l,t)}[(d_g1_{l\m}1_{gt})_{g\in G}]&=(\af_l(d_g1_{l\m})1_{lg}1_{ltg})_{g\in G}
\stackrel{\lambda=lgt}=(\af_l(d_g1_{l\m})1_{\lambda}1_{\lambda t\m})_{\lambda\in G},
\end{align*}  and $\hat\alpha_{(l,t)} $ is a well defined isomorphism whose inverse is $\hat\alpha_{(l\m,t\m)}.$ Moreover it is not difficult to check  conditions (P1)-(P4)  in \S 2.1.

2) Let $\varphi:S\otimes S\to \prod\limits_{g\in G}S_g$ defined by $\varphi(x\ot y)=(x\af_g(y1_{g\m}))_{g\in G},$ then by \cite[Theorem 4.1, iv)]{DFP} $\varphi$ is a $R$-algebra isomomorphism. We check that $\varphi$ satisfies conditions (i)-(ii) of Definition \ref{pariso}.

\noindent (i) For $(l,t)\in G\times G,$ we have  
$$\varphi[S_l\otimes S_{t\m}]=\prod\limits_{g\in G}S_l\af_g(S_{t\m}1_{g\m})=\prod\limits_{g\in G}S_gS_lS_{gt\m}=\left (\prod\limits_{g\in G}S_g\right)_{(l,t)}.$$

\noindent (ii) Take $x,y\in S,$ then 
\begin{align*}
(\varphi \circ (\af\ot \af^*)_{(l,t)})(x1_{l\m}\ot y1_t)&=\varphi  [\af_l(x1_{l\m})\ot \af_{t\m}( y1_t)]\\
&=(\af_l(x1_{l\m})\af_{gt\m}( y1_{g\m t})1_g)_{g\in G}\\
&\stackrel{g=lt\lambda}=(\af_l(x1_{l\m})\af_{l\lambda}( y1_{l\m \lambda\m})1_{lt\lambda})_{\lambda\in G}\\
&=(\af_l(x\af_{\lambda}( y 1_{\lambda\m})1_{t\lambda}1_{l\m}))_{\lambda\in G},
\end{align*}
on the other hand
\begin{align*}
( \af'_{(l,t)}\circ\varphi )(x1_{l\m}\ot y1_t)&=\af'_{(l,t)}[(x1_{l\m} \af_g(y1_{g\m})1_{gt})_{g\in G}]\\
&=\af'_{(l,t)}[(x\af_g(y1_{g\m})1_{l\m} 1_{gt})_{g\in G}]\\
&=(\af_{l}(x\af_g(y1_{g\m})1_{l\m} 1_{gt}))_{g\in G},
\end{align*} 
and thus $\varphi \circ (\af\ot \af^*)_{(l,t)}=\af'_{(l,t)}\circ\varphi $ in $S_{l\m}\ot S_t$, for all $l,t\in G.$ We conclude that  $\af\ot \af^*$ and $\af'_{(l,t)}$ are   partially $(G\times G)$-isomorphic.

3)  To check \eqref{idempe} notice that $(d_g)_{g\in G}\in \left(\prod\limits_{g\in G}S_g\right)^{\!\delta G},$ if and only if, for all  $l\in G$ 
$$
(d_g1_{l}1_{l g})_{g\in G}= \hat\alpha_{(l,l\m)}[(d_g1_{l\m}1_{l\m g})_{g\in G}]= (\alpha_{l}(d_g1_{l\m})1_{g})_{g\in G}.$$
Now  since $(S\ot S,\af\ot \af^*)$ and $\left(\prod\limits_{g\in G}S_g, \hat\alpha\right)$ are 
 $G\times G$-isomorphic, then  by Proposition \ref{welldef} the partial actions $((S\ot S)^{\delta_G},(\af\ot \af^*)'_{G\times G/\delta G})$  and $(E(S,\alpha), \hat\alpha'_{G\times G/\delta G})$ are partially $G$-isomorphic, thus $[(E(S,\alpha),\hat\alpha'_{G\times G/\delta G})]$ is an idempotent in $\Hh_{par}(G,R).$\\

Finally to prove \eqref{acfix}  note that $\hat\alpha_{(1,l)\delta G}((d_g)_{g\in G})$ equals
\begin{align*}&\af_{(1,l)}(d_g1_{(1,l\m)})_{g\in G}+ \sum_{i=2}^m\prod\limits_{j=1}^{i-1}((1_g)_{g\in G}-1_{(h_{j}, lh\m_{j})})\af_{(h_i, lh\m_i)}(d_g1_{(h_i\m, l\m h_i)})_{g\in G}\stackrel{\eqref{achat}}=\\
&(d_g1_{lg})_{g\in G}+ \sum_{i=2}^m\prod\limits_{j=1}^{i-1}((1_g)_{g\in G}-(1_g1_{h_{j}}1_{l\m h_jg})_{g\in G})(\af_{h_i}(d_g1_{h_i\m} )1_{l g})_{g\in G}\stackrel{\eqref{idempe}}=\\
&(d_g1_{lg})_{g\in G}+ \sum_{i=2}^m\prod\limits_{j=1}^{i-1}((1_g)_{g\in G}-(1_{lh_{j}}1_{h_jg})_{g\in G})(d_g1_{h_i} 1_{h_ig}1_{l g})_{g\in G},
\end{align*}
as desired. \endproof
\begin{rem} Let $E(S,\alpha)_g$ the projection of $E(S,\alpha)$ onto the $g$th coordinate. Then $R=E(S,\alpha)_e$ and $R1_g\subseteq E(S,\alpha)_g$ for $g\neq e.$
\end{rem}
Let $\mathcal E$ be the meet semilattice of idempotents of $\Hh_{par}(G,R),$ by Proposition \ref{idemp}, the element  $E(\alpha)$ is such that its equivalence class belongs to  $\mathcal E.$ From this we have that $\Hh_{par}(G,R)=\bigcup\limits_{E(S,\alpha)}\Hh_{par, E(S,\alpha)}(G,R),$ where $\Hh_{par, E(S,\alpha)}(G,R)$ is the subgroup of $\Hh_{par}(G,R)$ whise identity element is the class $[E(S,\alpha),\hat\alpha_{G\times G/\delta G})].$ Then the following is clear.
\begin{equation*}\label{Ealpha}
\Hh_{par, E(\alpha)}(G,R)=\{[S', \alpha']:[E(S',\alpha'),\hat\alpha'_{G\times G/\delta G})]=[E(S,\alpha),\hat\alpha_{G\times G/\delta G})]\}.
\end{equation*}

Using \eqref{acfix} we get the following.
\begin{prop} Let $R\subseteq S'$ be a partial Galois extension with partial action $\alpha',$ and  $S'_g=S1'_{g},$ for all $g\in G$. Then $[S', \alpha']\in \Hh_{par, E(S,\alpha)}(G,R),$ if and only if,  there is a $R$-algebra homomorphism $f: E(S,\alpha)\to E(S', \alpha')$ such that $f( (1_g)_{g\in G})=(1'_g)_{g\in G}.$ 
\end{prop}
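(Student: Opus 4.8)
The plan is to reduce the statement to the equality of the two idempotents $[E(S,\alpha),\hat\alpha_{G\times G/\delta G}]$ and $[E(S',\alpha'),\hat\alpha'_{G\times G/\delta G}]$ in $\Hh_{par}(G,R)$, using the description of $\Hh_{par, E(S,\alpha)}(G,R)$ displayed just above the statement, and then to recognize such an equality as a partial $(G\times G)/\delta G$-isomorphism which, by Proposition \ref{pro9}, is detected by a single $R$-algebra homomorphism. Throughout I will use that, by Proposition \ref{idemp}, both $E(S,\alpha)$ and $E(S',\alpha')$ are partial Galois extensions of $R$ with group $(G\times G)/\delta G\cong G$, and that the tuple $(1_g)_{g\in G}$ is exactly the identity element of $E(S,\alpha)$ (it lies in $E(S,\alpha)$ by \eqref{idempe}), so that the condition $f((1_g)_{g\in G})=(1'_g)_{g\in G}$ is precisely the requirement that $f$ be unital.

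For the forward implication, if $[S',\alpha']\in \Hh_{par, E(S,\alpha)}(G,R)$ then by definition $[E(S',\alpha'),\hat\alpha'_{G\times G/\delta G}]=[E(S,\alpha),\hat\alpha_{G\times G/\delta G}]$, so there is a partial $(G\times G)/\delta G$-isomorphism $f\colon E(S,\alpha)\to E(S',\alpha')$. In particular $f$ is a unital $R$-algebra homomorphism, and being an isomorphism it sends identity to identity, i.e.\ $f((1_g)_{g\in G})=(1'_g)_{g\in G}$, as required.

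For the converse I would start from a unital $R$-algebra homomorphism $f\colon E(S,\alpha)\to E(S',\alpha')$ and verify that $f$ satisfies conditions (i) and (ii) of Definition \ref{pariso} for the partial actions $\hat\alpha_{G\times G/\delta G}$ and $\hat\alpha'_{G\times G/\delta G}$. The point is that the whole partial-action datum on the idempotent algebra is written explicitly, in \eqref{1gh} and \eqref{acfix}, in terms of the coordinate idempotents of the identity: the domain unit $\tilde 1_{(1,l)\delta G}$ is, via the version of \eqref{1gh} for $(G\times G)/\delta G$, built from these idempotents, so a unital $f$ sends it to $\tilde 1'_{(1,l)\delta G}$, yielding (i); and since \eqref{acfix} expresses $\hat\alpha_{(1,l)\delta G}((d_g)_{g\in G})$ as a fixed $\Z$-combination of products of $(d_g)_{g\in G}$ with such idempotents, $f$ intertwines $\hat\alpha_{(1,l)\delta G}$ and $\hat\alpha'_{(1,l)\delta G}$ on the respective domains, yielding (ii). Once (i) and (ii) hold, Proposition \ref{pro9} promotes $f$ to a partial $(G\times G)/\delta G$-isomorphism, whence $[E(S,\alpha),\hat\alpha_{G\times G/\delta G}]=[E(S',\alpha'),\hat\alpha'_{G\times G/\delta G}]$ and $[S',\alpha']$ lies in $\Hh_{par, E(S,\alpha)}(G,R)$.

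The step I expect to be the main obstacle is the verification that a bare unital homomorphism automatically respects (i) and (ii): one must show that $f$ carries the coordinate idempotents $1_{g}$ appearing in \eqref{1gh} and \eqref{acfix} to their primed counterparts $1'_{g}$, i.e.\ that these locally-defined idempotents are recovered from the ring structure of $E(S,\alpha)$ together with its identity. I would handle this by expressing each such idempotent intrinsically --- through the domain units $\tilde 1_{(1,l)\delta G}$ and the coordinate projections recorded in the Remark ($E(S,\alpha)_e=R$ and $R1_g\subseteq E(S,\alpha)_g$) --- so that it becomes a polynomial in elements already pinned down by $f((1_g)_{g\in G})=(1'_g)_{g\in G}$, after which (i) and (ii) follow by a direct but routine computation with \eqref{acfix}.
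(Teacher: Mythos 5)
Your proposal is correct and takes essentially the same route as the paper: the forward direction is dismissed as clear, and the converse checks conditions (i) and (ii) of Definition \ref{pariso} via \eqref{1gh} and \eqref{acfix} respectively, then invokes Proposition \ref{pro9} to upgrade the unital homomorphism to a partial $G$-isomorphism of the idempotent extensions. If anything you are more careful than the paper, whose proof simply cites \eqref{1gh} and \eqref{acfix} without comment, so the obstacle you flag --- that a merely unital $f$ must carry the coordinate idempotents appearing in those formulas to their primed counterparts --- is not addressed there at all.
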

\begin{proof}  The part ($\Rightarrow$) is clear. Conversely, suppose that there is a  $f: E(S,\alpha)\to E(S', \alpha')$ such that $f( (1_g)_{g\in G})=(1'_g)_{g\in G},$ then by \eqref{1gh} we see that $f(E(S,\alpha)1_{(g,1)\delta G})\subseteq E(S',\alpha'),$ for all $g\in G,$ moreover by \eqref{acfix} we get  that $f$ satisfies ii) of Definition \ref{pariso}. Then the result follows from  Proposition \ref{pro9}.
\end{proof}
%

\subsection{Examples and remarks}

The study of partial Galois extension of finite abelian groups reduces to the study  of partial Galois extension of cyclic groups.  Indeed let $G$ be a finite abelian group, then there are cyclic groups $G_1, G_2, \cdots, G_n $ such that $G=G_1\times G_2\times \cdots \times G_n.$ Further, for each $1\leq i\leq n,$ let $S_i/R$ be a partial Galois extension with group $G_i$ and partial action $\af_i.$ Consider $S=S_1\otimes_R S_2\otimes_R \cdots \otimes_R S_n,$ then by \cite[Proposition 2.9]{DPP} we have that $S/R$ is a partial Galois extension with group $G$ and partial action $\af=\af_1\otimes \af_2\otimes\cdots \otimes \af_n$. Thus, we have a map
$$\prod_{i=i}^n\Hh_{par}(G_i,R) \ni ([S_1, \alpha_1],[S_2, \alpha_2], \cdots, [S_n, \alpha_n])  \stackrel{\phi}\mapsto [S, \alpha]\in \Hh_{par}(G,R) .$$
To construct its inverse, let $S/R$ is a partial Galois extension of $G$ on $S$ with partial action $\alpha,$  for each $1\leq i\leq n,$ consider $H_i=G_1\times G_2\times \cdots G_{i-1}\times\{e\}\times G_{i+1} \times\cdots \times  G_n,$ then $H_i$ acts partially on $S.$ Write $S_i=S^{\alpha_{H_i}},$ then there is a group isomorphism $G/H_i\simeq G_i$ and by Proposition 3.4 $S_i/R$ is a partial Galois extension with group $G_i,$ and we  have a map
$$ \Hh_{par}(G,R) \ni [S, \alpha]  \stackrel{\varphi}\mapsto ([S_1, \alpha_1],[S_2, \alpha_2], \cdots, [S_n, \alpha_n])  \in \prod_{i=i}^n\Hh_{par}(G_i,R) .$$
It is not difficult to check that the map $\phi$ is a bijection with inverse $\varphi.$

\begin{rem}\label{rempartfix}
	Note that for $\sum_ix_i\ot y_i\in(A\ot B)^{\delta G}$ we have
	\begin{equation}\label{parfix}
	\alpha_g\ot \alpha_{g\m}(\sum_ix_i1_{g^{-1}}\ot y_i1_g)=\sum_ix_i1_g\ot y_i1_{g^{-1}}.
	\end{equation}
	Hence,
	\begin{align*}
	(\alpha_{g\m}\ot \alpha_1)(\sum_ix_i1_g\ot y_i1_{g^{-1}})&\stackrel{\eqref{parfix}}=(\alpha_{g\m}\ot \alpha_1)(\alpha_g\ot \alpha_{g\m})(\sum_ix_i1_{g^{-1}}\ot y_i1_g)\\
	&=\sum x_i1_{g\m}\ot \alpha_{g\m}(y_i1_g)\\
	&=(\alpha_1\ot \alpha_{g^{-1}})(\sum_ix_i1_{g^{-1}}\ot y_i1_g).
	\end{align*}
\end{rem}

Now we give some examples to ilustrate our results.

From now on $G$ will denote the cyclic group $G=\langle g\mid g^4=1\rangle$   of order 4. 
\begin{exe}\label{ex1}
	Let  $R$ be  a commutative ring and set $S=Re_1\oplus Re_2 \oplus Re_3,$ where $e_1, e_2, e_3$ are non-zero orthogonal idempotents whose sum is one and let  and the partial action of $G$ on $S$ given by \cite[Example 6.1]{DFP}.  That is $S_g=Re_1\oplus Re_2, S_{g^2}=Re_1\oplus Re_3, S_{g^3}=Re_2\oplus Re_3$, and\\\\
	$\alpha_{g}: S_{g^3} \to S_{g};\,  \alpha_{g}(e_2)=e_1, \, \alpha_{g}(e_3)=e_2,$\\\\
	$\alpha_{g^2}: S_{g^2}\to S_{g^2}; \, \alpha_{g^2}(e_1)=e_3, \, \alpha_{g^2}(e_3)=e_1$ and \\\\
	$\alpha_{g^3}: S_{g} \to S_{g^3};\,  \alpha_{g^3}(e_1)=e_2, \, \alpha_{g^3}(e_2)=e_3.$\\\\
	Hence, $S$ is an $\alpha$-partial  Galois extension of $R$. Let $H=\{1,g^2\}\leq G$ and $\mathcal{T}=\{1,g\}$ a transversal of $H$ in $G$. Then $S^{\alpha_H}=R(e_1+e_3)\oplus Re_2$ and the family $\alpha'_{G/H}=(D'_g, \af'_g)_{g\in \mathcal{T}}$ is a ${G/H}$-partial Galois extension of $S^{\alpha}=R,$ where by \eqref{1primagg} we have $\tilde{1}_H=1_S=\tilde{1}_{gH}$ 
 and by equations  \eqref{alfaprima}, \eqref{alphapprima}   we have 
	$$\tilde D_H=\tilde D_{gH}=[R(e_1+e_3)\oplus Re_2](e_1+e_2)=S^{\alpha_H}$$  $\alpha_H={\rm id}_{S^{\alpha_H}}$ and 
	$\alpha_{gH}(x)=\af_g(x1_{g^3})+\af_{g^3}(x1_g)(1_S-1_g),$ for all $x\in S^{\af_H}.$
\end{exe}

It follows by Example \ref{ex1} that in general if $\alpha_{G/H}$ is global, then the partial action $\alpha$ is not necessarilly global.


\begin{exe}\label{ex3}
	
	Consider the  ring $S'=Re'_1\oplus Re'_2 $, where $e'_1, e'_2$ are non-zero orthogonal idempotents whose sum is one. We define a partial action $\theta$ of $G$ on $S'$ by taking $S'_1=S'$, $S'_{g}=Re'_2$, $S'_{g^2}=\{0\}$, $S'_{g^3}=Re'_1$, and setting  $\theta_1=id_{S'}$,
	$$\theta_{g}: S'_{g^3} \to S'_{g};\,  \theta_{g}(e'_1)=e'_2, \,\,\,\text{   and    }\,\,\,
	\theta_{g^3}: S'_{g} \to S'_{g^3};\,  \theta_{g^3}(e'_2)=e'_1.$$
	Note that, $S'^{\theta}=R$ and $\{x_1=y_1=e'_1, x_2=y_2=e'_2\}$ are  partial Galois coordinates of $S'$ over $R.$  We calculate the product $[(S',\theta^*)]\ast_{par}[(S',\theta)]$

	First we  get that 
	
	$$(S\otimes S')^{\delta G}=R(e'_1\otimes e'_1+e'_2\otimes e'_2)\oplus R(e'_1\otimes e'_2)\oplus R(e'_2\otimes e'_1).$$
	Now we use equation \eqref{1gh} to find the idempotents $1_{(x,1)\delta_G},$  with $x\in  G$. Let  $h_i=g^{i-1}, 1\leq i\leq 4.$ Then 
	
	\begin{align*}1_{(x,1) \delta G}&
	=1^*_x\otimes 1_S+\sum_{i=2}^4\prod\limits_{j=1}^{i-1}(1_S\otimes 1_{S}-1^*_{xh_{j}}\otimes 1_{h^{-1}_{j}})(1^*_{xh_i}\otimes 1_{h^{-1}_i})
	\\&=1_{x\m}\otimes 1_S+\sum_{i=2}^4\prod\limits_{j=1}^{i-1}(1_S\otimes 1_{S}-1_{(xh_{j})\m}\otimes 1_{h^{-1}_{j}})(1_{(xh_i)\m}\otimes 1_{h^{-1}_i})
	\end{align*}
	
%
%
%
%
%
%
%
%
 Then
\begin{itemize}
\item	$\tilde 1_{(g,1)}\delta G=e'_1\ot 1_S+e'_2\ot e'_2;$
\item	$\tilde 1_{(g^2,1)}\delta G=e'_2\ot e'_1+e'_1\ot e'_2;$
\item	$\tilde 1_{(g^3,1)}\delta G=e'_2\ot 1_S+e'_1\ot e'_1.$
\end{itemize}

%
%
%
%
%
%
%
	$$1'_{(g^3,1)}=e'_2\ot 1_S+e'_1\ot e'_1.$$
	
	From this we get,
	\begin{align*}
	\tilde D_{(g,1)\delta G}&=(S\otimes S')^{\delta G}1_{(g,1)\delta G}\\
	&=[R(e'_1\otimes e'_1+e'_2\otimes e'_2)\oplus R(e'_1\otimes e'_2)\oplus R(e'_2\otimes e'_1)]
	(e'_1\ot 1_S+e'_2\ot e'_2)\\
	&=R(e'_1\otimes e'_1+e'_2\otimes e'_2)\oplus R(e'_1\otimes e'_2),
	\end{align*}
	\begin{align*}
	\tilde D_{(g^3,1)\delta G}&=(S\otimes S')^{\delta G}1_{(g^3,1)\delta G}\\
	&=[R(e'_1\otimes e'_1+e'_2\otimes e'_2)\oplus R(e'_1\otimes e'_2)\oplus R(e_2\otimes e'_1)](e'_2\ot 1_S+e'_1\ot e'_1)
	\\&=R(e'_1\otimes e'_1+e'_2\otimes e'_2)\oplus R(e'_2\otimes e'_1)
	\end{align*}
	and
	\begin{align*}
	\tilde D_{(g^2,1)\delta G}&=(S\otimes S')^{\delta G}1_{(g^2,1) \delta G}\\
	&=[R(e'_1\otimes e'_1+e'_2\otimes e'_2)\oplus R(e_1\otimes e'_2)\oplus R(e'_2\otimes e'_1)](e'_1\ot e'_2+e'_2\ot e'_1)\\
	&=R(e'_1\otimes e'_2)\oplus R(e'_2\ot e'_1).
	\end{align*}
	Now, using equation (\ref{alphapprima}) we find $\hat\lambda _{(l,1)\delta G}$ with $l\in G,$ where $\lambda=\theta\ot \theta^*$.  We get
\begin{itemize}
\item 	$\hat\lambda _{(g,1)\delta G}(r(e'_1\otimes e'_1+e'_2\otimes e'_2)\oplus s(e'_2\otimes e'_1))=r(e'_1\otimes e'_1+e'_2\otimes e'_2)+s(e'_1\ot e'_12.$
\item $\hat\lambda _{(g^2,1)\delta G}(r(e'_1\ot e'_2)+s(e'_2\ot e'_1))=s(e'_1\ot e'_2)+r(e'_2\ot e'_1).$
\item $\hat\lambda _{(g^3,1)\delta G}=\hat\lambda\m _{(g,1)\delta G}.$
\end{itemize}

\end{exe}

\end{document}